\newtheorem{prop}{Proposition}[section]
\newtheorem{lema}{Lemma}[section]
\newtheorem{teo}{Theorem}[section]
\newtheorem{coro}{Corollary}[section]
\newtheorem{claim}{Claim}[section]
\theoremstyle{definition}
\newtheorem{defi}{Definition}[section]
\newtheorem{rem}{Remark}[section]
\def\R{{\mathbb R}}
\def\N{{\mathbb N}}
\def\F{{\mathcal F}}
\def\M{{\mathcal M}}
\def\K{{\mathcal K}}
\title[Almost-additive thermodynamic formalism ]{Almost-additive thermodynamic formalism for countable Markov shifts}
\date{\today}
\author{Godofredo Iommi} \address{Facultad de Matem\'aticas,
Pontificia Universidad Cat\'olica de Chile (PUC), Avenida Vicu\~na Mackenna 4860, Santiago, Chile}
\email{giommi@mat.puc.cl}
\urladdr{http://www.mat.puc.cl/\textasciitilde giommi/}
\author{Yuki Yayama} \address{Centro de Modelamiento Matem\'{a}tico, Universidad de Chile, Avenida Blanco Encalada 2120, Santiago, Chile}
\email{yyayama@dim.uchile.cl}
\begin{document}

\begin{abstract}
We introduce a definition of pressure for almost-additive sequences of continuous functions defined over (non-compact) countable Markov shifts. The variational principle 
is proved. Under certain assumptions we prove the existence of Gibbs and equilibrium measures.  Applications are given to the study of maximal Lypaunov exponents of product of matrices.  
\end{abstract}

\maketitle

\section{Introduction}
This paper has two different starting points. On the one hand, we have the thermodynamic formalism developed 
for sub-additive and almost-additive sequences of continuous functions.
The idea of this theory is to generalise classical results on thermodynamic formalism replacing the 
pressure of a continuous function with the pressure of a sequence of  continuous functions. It 
was  Falconer \cite{f} who introduced  this set of ideas  with the purpose of studying dimension 
theory of non-conformal systems. Recall that the relation between thermodynamic formalism and 
dimension theory has been extensively and successfully exploited ever since the pioneering work of 
Bowen \cite{bo2} (see the books \cite{b4, f2, Pe,Pru} for recent developments of the theory).  If 
an expanding  dynamical system, $T:M \to M$,  is conformal then it is possible to describe 
in great detail the Hausdorff dimension of dynamically defined subsets of the phase space. 
For instance, making use of the classic thermodynamic formalism, it is possible to study 
the size (e.g. Hausdorff dimension) of level sets determined by pointwise dimension of Gibbs 
measures or by Lyapunov exponents (see \cite[Chapter II and III]{b4}). The situation is far 
less developed if no conformal  assumption is made on the system. There are several reasons for 
this, one of them being that, in the conformal setting, dynamically defined balls are \emph{almost} balls and they form an optimal cover. However, in the non-conformal setting
 dynamically defined balls are  ellipses. Therefore, it is  likely that the dynamical cover is not optimal.  This is clearly related to the fact that the natural function used to estimate the dimension, namely the Jacobian,  in the conformal setting satisfies 
\[ \Vert DT ^{m+n} (x)\Vert = \Vert DT^m (T^n x) \Vert   \Vert DT^n (x) \Vert,\]
where $T^n$ denotes the $n-$th iterate of the map $T$ and  $\Vert \cdot \Vert$ is the operator norm. Whereas, if the map is not conformal we only have
\[ \Vert DT ^{m+n} (x)\Vert \leq \Vert DT^m (T^n x) \Vert   \Vert DT^n (x) \Vert.\]
Therefore, the sequence defined by $\phi_n(x) = \log \Vert DT^{n}(x)\Vert$
is sub-additive (not additive) and a new thermodynamic formalism is required to deal with this situation. This was the main motivation of Falconer \cite{f}. This problem has attracted a great deal of attention over the last two decades and recently interesting developments have been obtained.   We would like to single out the work of Barreira \cite{b1,b2,b3}  among many other substantial contributions to the theory. We will build up on his work.

Our second starting point is the ergodic theory for countable Markov shifts. Uniformly hyperbolic dynamical systems have finite Markov partitions, see \cite[Chapter 3]{bo3} and references therein for the case of discrete time dynamical systems. That type of coding allows for the proof of a great deal of fundamental results in ergodic theory. When systems are uniformly hyperbolic in most of the phase space but not in all of it, sometimes it is still possible to construct Markov partitions, although this time over countable alphabets. Probably, the best known example of such a situation being the Manneville-Pomeau map \cite{mp,s2}, which is an expanding interval map with a parabolic fixed point. Dynamical systems that can be coded using countable Markov partitions also occur in the study of one-dimensional real and complex multimodal maps. Indeed, a successful  technique used to study the ergodic theory of these maps and overcome the lack of hyperbolicity due to the existence of critical points is the so called \emph{inducing procedure}  (see  for example    \cite{bt2,it1, ps, pr}). Given a multimodal map it is possible  to associate  an induced map (which is a generalisation of the first return time map) which posses a countable Markov partition. The idea  is to translate problems to this new system, solve them and then push the results back.  The case of $C^r$ diffeomorphisms defined over compact orientable smooth surface  was recently studied by Sairg \cite{s5}. He constructed countable Markov partitions for large invariant sets. While all the above examples are important, the most natural ones arise in number theory. Indeed, the Gauss map and the map associated to the Jacobi-Perron algorithm are both  Markov over countable partitions (see \cite{ma, m2, pw}). The thermodynamic formalism for countable Markov shifts has been developed by Mauldin and  Urba\'nski \cite{mu1, mu} and by Sarig \cite{s1,s2,s3,s4} (see also \cite{ffy,gs}). The main difficulty is that the phase space is no longer compact, therefore fixed point theorems used in the compact setting can not be applied here and new techniques have to be developed.

The aim of this paper is to bring these two theories together. In doing so, we develop a set of 
tools that could be used to tackle a wide range of problems for which, at present time,  no 
machinery was available. Generalising the work of Barreira \cite{b1,b2,b3}, of Mauldin and Urba\'nski \cite{mu1, mu} and 
that of Sarig \cite{s1,s2,s3,s4}, we define a notion of pressure for almost-additive sequences of functions 
defined over a (non-compact) countable Markov shift (see Section \ref{sec:def}).  We prove that this pressure satisfies the variational principle (see Section \ref{sec:var}). The problem of the existence of Gibbs measures is also addressed, in Section \ref{sg}, under a combinatorial assumption  on the shift (that of being BIP) we prove the existence of Gibbs measures. We stress that our definition  and the variational principle hold for \emph{any} topologically mixing countable Markov shift.  
%
%We  stress that no transfer operator theory is available in the setting of almost-additive sequences (the natural Banach spaces are not preserved), therefore the proofs have to be done using alternative methods. 

As an application of our results, we study the Maximal Lyapunov exponents of product of 
matrices (see Section \ref{le}). We consider  a countable collection of $d \times d$ 
matrices, $\{A_1, A_2, \dots \}$,  and   a topologically mixing countable Markov 
shift $(\Sigma, \sigma)$. If $w=(i_0, i_1, \dots) \in \Sigma$  define a sequence of 
functions by
\begin{equation*}
\phi_n(w)= \log \Vert A_{i_{n-1}} \cdots  A_{i_1} A_{i_0} \Vert.
\end{equation*}
The thermodynamic formalism for such class of sub-additive sequence has been extensively 
studied over the last years. Feng, in a series of articles  \cite{Fe1,Fe2,Fe3},  has described in great  detail  the ergodic properties in the case in which $(\Sigma, \sigma)$ is a topologically mixing sub-shift over a finite alphabet. We extend some of his results to this non-compact setting under a combinatorial assumption on the shift and an 
almost-additivity assumption on the sequence. 

Another application we obtain is a formula relating the  pressure for almost-additive sequences  and the Hausdorff dimension of certain geometric constructions (see Section \ref{bow}). We actually  generalise  results obtained by Barreira \cite{b1}.

Finally,  a couple of examples are discussed. We obtain an explicit formula for the pressure of an almost-additive sequence of locally constant functions in the case of the full-shift (Section \ref{ex1}). We also discuss the thermodynamic formalism of an almost-additive sequence of continuous functions that naturally arises in the study of factor maps (see Section \ref{ex2}).

We believe that further interesting applications of the results presented here (or of generalisations of them) can be obtained. For instance, it would be of interest to study the \emph{Jacobi-Perron map}  which is the map $T : (0,1]^{2} \rightarrow (0,1]^{2}$
defined by
\begin{equation*}
T(x,y) = \left( \frac{y}{x}- \left[ \frac{y}{x} \right] , \frac{1}{x} -\left[ \frac{1}{x} \right] \right).
\end{equation*}
This map is closely related to the Jacobi-Perron algorithm of simultaneous approximation of real numbers \cite{sc}. It is a map which is Markov over a countable partition \cite{ma} and it is not conformal. 

It is also plausible that  a fine analysis of Lyapunov exponents in the spirit of \cite{bg} can also be achieved for certain classes of non-conformal maps that are also non-uniformly hyperbolic. This might be done using the techniques developed here.
 
\section{Definition of almost-additive Gurevich pressure} \label{sec:def}

Let $(\Sigma, \sigma)$ be a one-sided Markov shift
over a countable alphabet $S$. This means that there exists a matrix
$(t_{ij})_{S \times S}$ of zeros and ones (with no row and no column
made entirely of zeros) such that
\begin{equation*}
\Sigma=\left\{ x\in S^{\N_0} : t_{x_{i} x_{i+1}}=1 \ \text{for every $i
\in \N_0$}\right\}.
\end{equation*}
The \emph{shift map} $\sigma:\Sigma \to \Sigma$ is defined by $\sigma(x)=x'$, for $x=(x_n)_{n=0}^{\infty}, x'=(x'_n)_{n=0}^{\infty}, x'_n=x_{n+1}$ for all $n\in \N_{0}$.  Sometimes we simply say that $(\Sigma, \sigma)$ 
is a \emph{countable
Markov shift}. For an admissible word $i_0 \dots i_{n-1}$ of length $n$ in $\Sigma$, we define a cylinder set $C_{i_0 \dots i_{n-1}}$ of length $n$ by 
\begin{equation*}
 C_{i_0 \cdots i_{n-1}}= \left\{x \in \Sigma : x_j=i_j \text{ for } 0 \le j \le n-1 \right\}.
 \end{equation*}
We equip $\Sigma$ with the
topology generated by the cylinders sets. We denote by $\M$ the set of $\sigma$-invariant Borel probability measures on $\Sigma$.
We will always assume $(\Sigma, \sigma)$ to be topologically mixing, that is, for every $a,b \in S$ there exists $N_{ab} \in \N$ such that 
for every $n > N_{ab}$ we have
$C_a \cap \sigma^{-n} C_b \neq \emptyset$.
\begin{defi}\label{subadditive}
Let $(\Sigma,\sigma)$ be a one-sided countable state Markov shift. For each $n\in \N$, let $f_{n}: \Sigma \to \R^{+}$ be a continuous function. 
A sequence $\mathcal{F}= \{ \log f_n \}_{n=1}^{\infty}$ on $\Sigma$ is called
 \emph{sub-additive} if for every $n, m\in \N,x\in \Sigma$, we have 
\begin{equation}\label{sc}
0 < f_{n+m}(x) \leq f_n(x) f_{m}(\sigma^n x).
\end{equation}
\end{defi}
Recall that if $\F$ is a sub-additive sequence, then  by  Kingman's sub-additive ergodic theorem \cite{ki},  there exits a measurable function $f$ with the following property:
Let $\mu\in \M$. If $\log f_n: \Sigma \rightarrow \R \cup \{-\infty\}$ for all $n\in \N$ and $(\log f_1)^{+}\in L^{1}(\mu)$, then for  $\mu$-almost every $x \in \Sigma$, 
\begin{equation*}
\lim_{n \to \infty} \frac{1}{n} \log f_n(x) = f(x) \quad \text{ and } \quad
\lim_{n \to \infty} \frac{1}{n} \int \log f_n(x) \ d\mu= \int f(x)\ d \mu.
\end{equation*}

\begin{defi} \label{aaa}
Let $(\Sigma,\sigma)$ be a one-sided countable state Markov shift. For each $n\in \N$, let  $f_{n}: \Sigma \to \R^{+}$ be a continuous function.
A sequence $\mathcal{F}= \{ \log f_n \}_{n=1}^{\infty}$ on $\Sigma$ is called \emph{almost-additive} if there exists a constant $C\geq0$ such that for every $n,m\in \N, x\in \Sigma$, we have  
\begin{equation} \label{A1}
 f_n(x) f_{m}(\sigma^n x) e^{-C} \leq f_{n+m}(x),
 \end{equation}
and
\begin{equation} \label{A2}
 f_{n+m}(x) \leq  f_n(x) f_{m}(\sigma^n x) e^{C}.
    \end{equation}
   \end{defi}
In most parts of this paper, we will assume the sequence $\F$ to be almost-additive. It should be  pointed out that even in the compact setting several results we present here are not valid under weaker assumptions.

\begin{rem} \label{cla}
If $f:\Sigma\rightarrow \R$ is a continuous function, the Birkhoff sums of $f$ form an almost-additive sequence of continuous functions, in this case the constant $C$ of Definition \ref{aaa} is equal to zero. Indeed,  for every $n \in \N, x\in \Sigma$, define $f_n:\Sigma\rightarrow \R^{+}$ 
by $f_n(x)=e^{f(x)+f(\sigma x) +\dots+f (\sigma^{n-1}x)}$. Then the sequence $\{\log f_n\}_{n=1}^{\infty}$ is additive. 
This construction is the link that ties up the thermodynamic formalism for a continuous function with that of sequences of continuous functions.
\end{rem}

One of the important ingredients in thermodynamic formalism is the regularity assumptions on the (sequence of) continuous functions. 
Several  results depend upon this hypothesis. In the rest of paper, we will always assume the following regularity conditions.

\begin{defi}\label{bowen}
Let $(\Sigma, \sigma)$ be a one-sided countable Markov shift. For each $n\in \N$, let $f_{n}: \Sigma \rightarrow \R^{+}$ be continuous. A sequence 
$\mathcal{F}= \{ \log f_n \}_{n=1}^{\infty}$ on $\Sigma$ is called a \emph{Bowen} 
sequence if there exists $M \in \R^{+}$ such that
\begin{equation}\label{bowenbound}
 \sup \{ A_n : n \in \NÊ\} \leq M, 
\end{equation}
where
\[A_n= \sup \left\{ \frac{f_n(x)}{f_n(y)} : x,y  \in \Sigma, x_i=y_i \textrm{ for } 0 \leq i \leq n-1\right\}.\]
\end{defi}

The definition above is related to a regularity assumption introduced by Bowen when studying classic thermodynamic formalism. Indeed, for $f\in C(\Sigma)$, let $V_n(f):=\sup\{|f(x)-f(x')|: x, x'\in \Sigma,  x_i=x'_{i}, 0\leq i\leq n-1\}$ and $\sigma_nf:=\sum_{i=0}^{n-1}f\circ \sigma^{i}$. The \emph{Bowen class} is defined by $\{f\in C(\Sigma): \sup_{n\in \N}V_{n}(\sigma_nf)<\infty\}$  (see \cite{bo, w3}). Most of the thermodynamic formalism  for continuous functions is well developed when the function belongs to the Bowen class. 
 %(see \cite{bo} for the definition of the Bowen class and \cite{w3} for  related concepts). This is a regularity assumption. 
It is easy to see that given a function in the Bowen class, the additive sequence constructed  in Remark \ref{cla} is a Bowen sequence. We can think of a Bowen sequence as the natural generalisation of  a function belonging to the  Bowen class. 

We also remark that our definition of Bowen sequence restricted to a compact set is equivalent to the definition  of sequences of bounded variation given  by 
Barreira in \cite{b2} (see also \cite{m} for similar conditions). 
It is plausible that results presented in sections \ref{sec:def} and \ref{sec:var} could be extended to a larger class  of sequences of continuous functions satisfying a tempered 
condition of the type $\lim_{n\rightarrow \infty}A_n/n=0$ (see \cite{b2} for more about this condition).

The aim of this section is to provide a good definition of pressure for almost-additive 
sequences of continuous functions defined over a countable Markov shift.  There exist several definitions of pressure for sub-additive (and hence for almost-additive) sequences defined 
over compact spaces. Indeed,  Falconer \cite{f} gave a definition that behaves well for sub-shifts of finite type defined over finite alphabets.   Cao,  Feng and Huang \cite{cao} 
gave one based on $(n,\epsilon)-$sets and Barreira \cite{b1, b2} studied a definition using the theory of dimension-like characteristics  
developed by Pesin \cite{Pe}. Mummert also gave a definition in the same spirit \cite{m}.   
We stress that our situation completely differs from the above  since our phase space is no longer compact. It is important that our definition does not depend upon 
the metric (as in the case of Cao et al \cite{cao}), so that it satisfies a variational principle (for a discussion of this issue see Section \ref{sec:var}). 
Also note that we cannot continuously  extend $\F$ to any compactification of the space $\Sigma$ (which is what is needed if we want to extend the definitions 
of  Barreira \cite{b1,b2} or Mummert \cite{m}) because the sequence $\F$ is not assumed to be bounded. Finally, note that if we extend the definition given by Falconer \cite{f}   to the countable Markov shift setting, then only a narrow class of shifts would satisfy the variational principle (see \cite{gu1, gu2}).

For countable Markov shifts, the thermodynamic formalism has been developed by Mauldin and Urba\'nski \cite{mu1, mu} for a certain class of Markov shifts with  combinatorics  close to that of the full-shift and in full generality by Sarig \cite{s1,s2,s3,s4}. 
The definition we propose is both a generalisation of the Gurevich pressure for Markov shifts over a countable alphabet introduced by Sarig in \cite{s1} and the  
pressure for almost-additive sequences on compact spaces introduced independently by Barreira in \cite{b2} and by Mummert \cite{m}.

\begin{defi}
Let $(\Sigma,\sigma)$ be a countable state Markov shift. For $a \in S$ and an almost-additive Bowen  sequence $\mathcal{F}= \{ \log f_n \}_{n=1}^{\infty}$ on $\Sigma$, we define the \emph{partition function} by
\begin{equation}
Z_n(\F,a)= \sum_{\sigma^n x= x} f_n(x) \chi_{C_a}(x),
\end{equation}
where $\chi_{C_a}(x)$ is the characteristic function of the cylinder $C_a$.
\end{defi}

\begin{defi}
Let $(\Sigma,\sigma)$ be a countable state Markov shift. The \emph{almost-additive Gurevich pressure} of an almost-additive sequence $\F$ on $\Sigma$ is defined by
\begin{equation}
P(\F)= \lim_{n \to \infty} \frac{1}{n} \log Z_n(\F,a).
\end{equation}
\end{defi}

\begin{rem} If $f:\Sigma \to \R$ is a continuous function of summable variations, define 
$\F=\{\log f_n\}_{n=1}^{\infty}$ where $f_n(x)=e^{f(x)+f(\sigma x) +\dots+f (\sigma^{n-1}x)}$ for every $n \in \N, x\in \Sigma$. Then the almost-additive Gurevich pressure of $\F$ 
coincides with the Gurevich pressure of $f$ defined by Sarig in \cite{s1}. Also, if the Markov shift $(\Sigma,\sigma)$ is defined over a finite alphabet, then the 
almost-additive Gurevich pressure coincides with the almost-additive pressure  introduced by Barreira in \cite{b2}(compare also with Mummert's definition in \cite{m}).  
 \end{rem}  

The rest of this section is devoted to prove that the almost-additive Gurevich pressure is actually well defined.
Throughout the rest of this section, we assume that $S$ is a countable alphabet and $(\Sigma, \sigma)$ is a topologically mixing countable Markov shift. 
For a Bowen sequence $\F=\{ \log f_n \}_{n=1}^{\infty}$ on $\Sigma$, we let $M$ and $A_n, n\in \N$ be defined as in Definition \ref{bowen}.
\begin{lema} \label{1}
Let $\mathcal{F}= \{ \log f_n \}_{n=1}^{\infty}$ be a Bowen sequence on $\Sigma$ that satisfies equation \eqref{A1}. Then there exists a constant $k \in \R$ such that 
for every $a\in S, n,m \in \N$, we have
\begin{equation*}
Z_n(\F,a) Z_m(\F,a)  \leq Z_{n+m}(\F,a) e^{k}.
\end{equation*}
 \end{lema}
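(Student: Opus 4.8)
The plan is to establish a "super-multiplicativity up to a constant" estimate for the partition functions by concatenating periodic orbits. Given a point $x$ with $\sigma^n x = x$ and $x \in C_a$, and a point $y$ with $\sigma^m y = y$ and $y \in C_a$, the natural idea is to build a point $z$ with $\sigma^{n+m} z = z$, $z \in C_a$, whose first $n$ coordinates agree with those of $x$ and whose coordinates $n, n+1, \dots, n+m-1$ agree with those of $y$. Concretely, if $x = (x_0 x_1 \cdots x_{n-1})^\infty$ and $y = (y_0 y_1 \cdots y_{m-1})^\infty$, then since $x_0 = y_0 = a$, the word $x_0 \cdots x_{n-1} y_0 \cdots y_{m-1}$ is admissible (the transition $x_{n-1} \to y_0$ is legal because $x_{n-1} \to x_0 = a = y_0$ is, as $x$ is a periodic point, and similarly $y_{m-1} \to y_0 = a = x_0$), so $z := (x_0 \cdots x_{n-1} y_0 \cdots y_{m-1})^\infty$ is a well-defined periodic point of period $n+m$ lying in $C_a$. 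This gives an injection from (orbit of $x$) $\times$ (orbit of $y$) pairs into period-$(n+m)$ points in $C_a$.

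Next I would compare $f_{n+m}(z)$ with $f_n(x) f_m(\sigma^n y_{\text{shifted}})$. Using the almost-additivity inequality \eqref{A1} applied to $z$, we get $f_n(z) f_m(\sigma^n z) e^{-C} \le f_{n+m}(z)$. Now $z$ agrees with $x$ in coordinates $0, \dots, n-1$, so by the Bowen property, $f_n(z) \ge f_n(x)/A_n \ge f_n(x)/M$. Similarly $\sigma^n z$ agrees with $y$ in coordinates $0, \dots, m-1$ (indeed $\sigma^n z = (y_0 \cdots y_{m-1} x_0 \cdots x_{n-1})^\infty$), so $f_m(\sigma^n z) \ge f_m(y)/A_m \ge f_m(y)/M$. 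Hence
\begin{equation*}
f_{n+m}(z) \ge \frac{f_n(x) f_m(y)}{M^2} e^{-C}.
\end{equation*}
Summing over all periodic $x \in C_a$ of period $n$ and all periodic $y \in C_a$ of period $m$, and using that distinct pairs $(x,y)$ give distinct points $z$ of period $n+m$ in $C_a$, we obtain
\begin{equation*}
Z_n(\F,a) Z_m(\F,a) \le M^2 e^{C} \sum_{\sigma^{n+m} z = z} f_{n+m}(z) \chi_{C_a}(z) = M^2 e^{C} Z_{n+m}(\F,a),
\end{equation*}
which is the claim with $e^k = M^2 e^C$, i.e. $k = 2\log M + C$.

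The main technical point to be careful about is the injectivity of the map $(x,y) \mapsto z$: one must check that a period-$(n+m)$ point can be written as a concatenation of a period-$n$ block and a period-$m$ block both starting with $a$ in at most one way once we fix which coordinate block comes first — this is immediate from reading off the coordinates $z_0 \cdots z_{n-1}$ and $z_n \cdots z_{n+m-1}$, since $x$ is determined as $(z_0 \cdots z_{n-1})^\infty$ and $y$ as $(z_n \cdots z_{n+m-1})^\infty$, and both are genuine periodic points precisely because $z$ has period $n+m$ and the relevant transitions close up. A secondary subtlety is that $Z_n(\F,a)$ might a priori be infinite; the inequality still makes sense in $[0,\infty]$, and this is exactly what one wants, since finiteness of the partition function is a separate matter addressed afterwards. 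I do not expect any serious obstacle here — the only thing requiring a moment's thought is confirming that no admissibility condition is violated when we splice the two words, which follows from topological considerations on periodic points as indicated above.
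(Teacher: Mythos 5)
Your proposal is correct and follows essentially the same route as the paper: concatenate the two periodic words through the common symbol $a$, apply inequality \eqref{A1} to the concatenated periodic point, and use the Bowen bound $A_n A_m \leq M^2$ to compare with the original points, yielding $e^{k}=e^{C}M^{2}$. The only difference is cosmetic — you make explicit the injectivity of the pairing $(x,y)\mapsto z$ and the final summation, which the paper leaves implicit.
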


\begin{proof}
For $a\in S$, let $x \in \Sigma \cap C_a$ be a periodic point of period $n$, that is $\sigma^nx=x$. 
Then we can write $x=(x_0, x_1, \dots, x_{n-1}, x_0, x_1, \dots)$, where $x_0=a$. Let $A$ denote
the %segment% 
admissible word $x_0 \dots x_{n-1}$. Consider now $\tilde{x} \in \Sigma \cap C_a$, a periodic point of period $m$. Again, we can write 
$\tilde{x}=(\tilde{x}_0,\tilde{x}_1,  \dots, \tilde{x}_{m-1}, \tilde{x}_0, \tilde{x}_1 \dots)$, where $\tilde{x}_0=a$. Let $B$ denote
the %segment% 
admissible word $\tilde{x}_0 \dots \tilde{x}_{m-1}$. Let us consider now the point $x' \in Ê\Sigma \cap C_a$ obtained by concatenating the admissible words $A$ and $B$, that is
\[x'=ABABABABABAB \dots := (AB)^{\infty}.\]
Clearly $\sigma^{n+m} x'=x'.$
Since $\F$ satisfies equation \eqref{A1}, we obtain
\begin{eqnarray*}
e^{-C} f_n(x') f_{m}(\sigma^n x')  \leq f_{n+m}(x'),    \end{eqnarray*}
 which implies that
 \begin{eqnarray*} e^{-C} f_n(x') f_{m}(\sigma^n x') f_n(x)f_m(\tilde{x}) \leq  f_{n+m}(x')f_n(x)f_m(\tilde{x}).
   \end{eqnarray*}
  Hence
  \begin{eqnarray*} f_n(x)f_m(\tilde{x}) \leq e^{C}  f_{n+m}(x') \frac{f_n(x) f_m(\tilde{x})}{f_n(x') f_m(\sigma^n x')}.
    \end{eqnarray*}
Since for every $0\leq i \leq m-1$ we have $(\sigma^n x')_i= \tilde{x}_i$, we obtain
\begin{equation*}
 f_n(x)f_m(\tilde{x}) \leq e^{C}  f_{n+m}(x') A_n A_m \leq e^C M^2 f_{n+m}(x').\end{equation*}
The result now follows  setting $e^k=  e^C M^2$.
\end{proof}

Let $\mathcal{F}= \{ \log f_n \}_{n=1}^{\infty}$ be a sequence of continuous functions on $\Sigma$  and let $m  \in \N$. For $w:\Sigma \to \R$ a
continuous function, we define
\begin{equation*}
\left(L_{\F}^m w  \right) (x) := \sum_{\sigma^m z =x} f_m(z) w(z)\quad  \text{ for }x\in \Sigma.
\end{equation*}
Let $Y \subset \Sigma$ be a topologically mixing finite state Markov shift. Then for a sequence $\F\vert_Y:=\{\log f_n\vert_Y\}_{n=1}^{\infty}$, we have 
\begin{equation*}
\left(L_{\F |_Y}^m w  \right) (y) = \sum_{\sigma^m z =y} f_m(z) w(z) \chi_Y(z) \quad \text{ for } y\in Y.
\end{equation*}

\begin{lema} \label{bounds}
Let $\F=\{ \log f_n \}_{n=1}^{\infty}$ be a Bowen sequence on $\Sigma$.
Let $Y \subset \Sigma$ be a topologically mixing finite state Markov shift such that $Y \cap C_a \neq \emptyset, a\in S$. Define
\[Z_n(Y,\F,a)= \sum_{\sigma^n x= x} f_n(x) \chi_{C_a \cap Y}(x).\]
Then for each $a\in S, x \in C_a \cap Y, n \in  \N$,
\begin{equation*}
 \frac{1}{M}L_{\F |_Y}^n \left( \chi_{C_a} \right) (x)\leq Z_{n}(Y, \F, a)\leq M L_{\F |_Y}^n \left( \chi_{C_a} \right) (x).
\end{equation*}
\end{lema}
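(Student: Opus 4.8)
The plan is to exhibit both $Z_n(Y,\F,a)$ and $L_{\F|_Y}^n(\chi_{C_a})(x)$ as sums over one and the same finite family of admissible words, and then to control the two families of summands term by term using the Bowen property. Since $Y$ is a finite-state Markov shift, a point belongs to $Y$ exactly when each of its letters lies in the (finite) alphabet of $Y$ and each consecutive pair of letters is an allowed transition of $Y$. Fix $a\in S$ and $x\in C_a\cap Y$; in particular $x_0=a$. Let $\mathcal{W}_n$ be the set of words $i_0 i_1\cdots i_{n-1}$ with $i_0=a$ that are admissible in $Y$ and for which the transition $i_{n-1}\to a$ is also allowed in $Y$. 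Note that $\mathcal{W}_n$ is finite, so all the sums below are honest finite sums.

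First I would identify $Z_n(Y,\F,a)$ with a sum over $\mathcal{W}_n$. Given $w=i_0\cdots i_{n-1}\in\mathcal{W}_n$, the periodic point $p^w:=(i_0,\dots,i_{n-1},i_0,\dots,i_{n-1},\dots)$ satisfies $\sigma^n p^w=p^w$, lies in $C_a$ (as $i_0=a$), and lies in $Y$ (as $w$ is admissible in $Y$ and $i_{n-1}\to i_0=a$ is allowed in $Y$). Conversely, any $p$ with $\sigma^n p=p$ and $p\in C_a\cap Y$ has $p=(p_0\cdots p_{n-1})^{\infty}$ with $(p_0,\dots,p_{n-1})\in\mathcal{W}_n$, the closing condition $p_{n-1}\to p_0=a$ coming from $p_n=p_0$ together with $p\in Y$. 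Hence $Z_n(Y,\F,a)=\sum_{w\in\mathcal{W}_n}f_n(p^w)$. In the same way, given $w\in\mathcal{W}_n$ the point $z^w:=(i_0,\dots,i_{n-1},x_0,x_1,\dots)$ satisfies $\sigma^n z^w=x$, lies in $C_a$, and lies in $Y$ (because $w$ is admissible in $Y$, $x\in Y$, and $i_{n-1}\to x_0=a$ is allowed in $Y$); and any $z$ with $\sigma^n z=x$ and $z\in C_a\cap Y$ arises this way from the word $(z_0,\dots,z_{n-1})\in\mathcal{W}_n$. Therefore $L_{\F|_Y}^n(\chi_{C_a})(x)=\sum_{w\in\mathcal{W}_n}f_n(z^w)$.

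Finally, for each $w\in\mathcal{W}_n$ the points $p^w$ and $z^w$ agree on their first $n$ coordinates $i_0\cdots i_{n-1}$, so the Bowen bound of Definition \ref{bowen} gives $M^{-1}\le f_n(p^w)/f_n(z^w)\le M$. Summing these inequalities over $w\in\mathcal{W}_n$ and using the two displayed identities yields $M^{-1}L_{\F|_Y}^n(\chi_{C_a})(x)\le Z_n(Y,\F,a)\le M\,L_{\F|_Y}^n(\chi_{C_a})(x)$, as claimed. The only genuinely delicate point is the combinatorial bookkeeping in the middle step, namely checking that the periodic points in $C_a\cap Y$ of period $n$ and the $\sigma^n$-preimages of $x$ lying in $C_a\cap Y$ are parametrised by the \emph{same} set $\mathcal{W}_n$ of words (this is where the hypotheses $x\in C_a\cap Y$ and that $Y$ is a Markov shift are used); once that is in place, the Bowen property does the rest and there is no analytic difficulty.
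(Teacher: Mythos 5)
Your proof is correct and follows essentially the same route as the paper's: the paper also matches each period-$n$ point $(a\overline{x}_1\dots\overline{x}_{n-1})^{\infty}$ in $C_a\cap Y$ with the preimage $a\overline{x}_1\dots\overline{x}_{n-1}a\tilde{x}$ of $x=a\tilde{x}$ (and vice versa) and applies the Bowen bound to these pairs, which agree in their first $n$ coordinates. You merely make the underlying bijection explicit through the word set $\mathcal{W}_n$, which is a clean way of packaging the same argument.
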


\begin{proof}
For $a\in S$, let $x \in C_a  \cap  Y$ and write $x=a \tilde{x}$. Then
\begin{eqnarray*}
 L_{\F |_Y}^n \left( \chi_{C_a} \right) (a\tilde{x})=
 \sum_{\sigma^n z = a \tilde{x}} f_n |_{Y} (z) \chi_{C_a \cap Y}(z) =
  \sum_{z\in Y, z=a z_1 z_2 \dots z_{n-1} a \tilde{x}}      f_n |_{Y} (z).
  \end{eqnarray*}
Let $\overline{x} \in C_a \cap Y$ be such that $\sigma^n \overline{x}=\overline{x}$.  Then we can 
write the point  $\overline{x} = (a, \overline{x}_1, \dots, \overline{x}_{n-1}, a, \overline{x}_1, \dots, \overline{x}_{n-1}, \dots)$. 
Now set $az_1z_2 \dots z_{n-1} = a\overline{x}_1 \dots \overline{x}_{n-1}$. In this way, we have $a\overline{x}_1 \dots \overline{x}_{n-1}a \tilde{x} \in C_a \cap Y$.
Since $\F$ is in the Bowen class,
\begin{equation*}
\frac{f_n(\overline{x})}{f_n|_{Y} (a\overline{x}_1 \dots \overline{x}_{n-1}a \tilde{x})}\leq M.
 \end{equation*}
  Therefore,
  \[ Z_n(Y, \F,a) \leq
M L_{\F |_Y}^n \left( \chi_{C_a} \right) (x).\]
In order to prove the other inequality, for $ay_1 \dots y_{n-1}a\tilde{x} \in Y$, we define the point  $x'=(a, y_1, \dots, y_{n-1}, a, y_1 \dots, y_{n-1},a, \dots)$ of period $n$. 
In this way, $x' \in C_a \cap Y$ and $\sigma^n x' = x.$ Therefore,
\begin{equation*}
\frac{f_n |_{Y} (ay_1 \dots y_{n-1} a \tilde{x})}{f_n(x')}\leq M.
\end{equation*}
Hence
\[\frac{1}{M} L_{\F |_Y}^n \left( \chi_{C_a} \right) (x) \leq Z_n(Y, \F,a).\]
\end{proof}

\begin{lema}  \label{3}
Let  $\mathcal{F}= \{ \log f_n \}_{n=1}^{\infty}$ be a Bowen sequence on $\Sigma$ that satisfies equation \eqref{A1} 
and let $Y  \subset \Sigma$ be a topologically mixing finite state Markov shift. Then there exists $\beta >0$ such that for each $a\in S, x\in C_{a}\cap Y, n\in\N$, 
\begin{equation*}
\left( L_{\F|_Y}^n \chi_{C_a}\right)(x) \geq \beta^n e^{-(n-1) C}.
\end{equation*}
Moreover,
\[Z_n(Y, \F,a) \geq \frac{1}{M}   \beta^n e^{-(n-1) C},\]
and so
\[Z_n( \F,a) \geq \frac{1}{M}   \beta^n e^{-(n-1) C},\]
where $C$ is defined as in equation (\ref{A1}).
\end{lema}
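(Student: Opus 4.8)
The plan is to exploit topological mixing of the finite-state shift $Y$ together with the almost-additivity lower bound \eqref{A1} and the Bowen property, in order to manufacture enough periodic orbits in $C_a\cap Y$ with controlled weight $f_n$. First I would fix a finite set of symbols $a=b_0,b_1,\dots,b_r$ contained in $Y$ that actually occur in $Y$ (since $Y$ is a nonempty finite-state shift, such symbols exist). By topological mixing of $Y$, for each ordered pair of symbols in $Y$ there is a transition word, and choosing a common large $N$ one gets, for each symbol $b$ of $Y$, an admissible word $w_b$ of a fixed length $\ell$ starting and ending at $a$ that "goes through" $b$; concatenating finitely many of these produces, for every large $n$, at least one admissible loop of length $n$ based at $a$ and lying in $Y$. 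The value $\beta>0$ will be built out of the minimum of $f_\ell$ over these finitely many building-block loops, together with the mixing length, so that the estimate is uniform in $a$ only insofar as we must allow $\beta$ and the constant to depend on $Y$ — which is exactly what the statement permits.

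Concretely, I would proceed as follows. Using topological mixing of $Y$, fix for each $a\in S$ with $C_a\cap Y\neq\emptyset$ a single admissible word $u=u(a)$ of some length $p$ with $u_0=u_p\ldots$ i.e. a loop $a\to a$ in $Y$; more precisely, there is a periodic point $z^{(a)}\in C_a\cap Y$ of some period $p_a$. For general $n$, write $n=q p_a + s$ with $0\le s<p_a$ and fill the remaining $s$ symbols using a mixing bridge from $a$ to $a$ of length $s$ (available for all sufficiently large $s$, and for small $s$ one absorbs them by reducing $q$ by one and using a longer bridge). This yields an admissible word $a v_1\cdots v_{n-1}$ in $Y$ with all "letters" drawn from a finite alphabet that depends only on $Y$ and the finitely many bridges. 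Hence there is at least one point $x'\in C_a\cap Y$ with $\sigma^n x'=x'$ (or more simply one $z\in Y$ with $\sigma^n z = x$ and $z\in C_a$, which is all we need for $L^n_{\F|_Y}\chi_{C_a}(x)$). Then \eqref{A1} applied $n-1$ times gives $f_n(z)\ge e^{-(n-1)C}\prod_{j} f_{?}(\cdot)$; I would split the loop into its $p_a$-blocks and the bridge, so that $f_n(z)\ge e^{-(n-1)C}\,(\min f_{p_a}\text{ on the relevant block})^{q}\cdot(\min f_s\text{ on bridge})$, and since there are only finitely many such blocks (up to the finite alphabet of $Y$) each factor is bounded below by a positive constant. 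Setting $\beta$ to be an appropriate power-normalised minimum of these constants gives $\left(L^n_{\F|_Y}\chi_{C_a}\right)(x)\ge \beta^n e^{-(n-1)C}$ for $x\in C_a\cap Y$.

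The remaining two inequalities are then immediate: by Lemma~\ref{bounds}, $Z_n(Y,\F,a)\ge \frac1M L^n_{\F|_Y}(\chi_{C_a})(x)\ge \frac1M \beta^n e^{-(n-1)C}$, and since $Z_n(\F,a)=\sum_{\sigma^n x=x}f_n(x)\chi_{C_a}(x)\ge \sum_{\sigma^n x=x}f_n(x)\chi_{C_a\cap Y}(x)=Z_n(Y,\F,a)$ because every periodic point of $Y\cap C_a$ is in particular a periodic point of $\Sigma\cap C_a$ and $f_n>0$, we conclude $Z_n(\F,a)\ge \frac1M\beta^n e^{-(n-1)C}$.

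The main obstacle I anticipate is the bookkeeping needed to handle arbitrary lengths $n$ uniformly: topological mixing only guarantees bridges of length $>N_{ab}$, so short exponents and the residue $s$ must be dealt with by a small-case analysis (absorbing a block into a longer bridge), and one must make sure that across all $a\in S$ and all $n$ the collection of "building block" subwords used stays finite and lies inside the fixed finite shift $Y$, so that the infimum defining $\beta$ is a minimum over a finite set and hence strictly positive. Once that finiteness is pinned down, the $e^{-(n-1)C}$ factor is just the cost of the $n-1$ applications of \eqref{A1}, and the $1/M$ is the Bowen distortion from Lemma~\ref{bounds}; nothing deeper is required.
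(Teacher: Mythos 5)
Your argument is correct in substance and lands on the same final chain (the pointwise lower bound on the transfer operator, then Lemma~\ref{bounds} for $Z_n(Y,\F,a)$, then monotonicity $Z_n(\F,a)\ge Z_n(Y,\F,a)$ since $f_n>0$), but you take a noticeably heavier route to the first inequality than the paper does. The paper's proof needs no block-and-bridge bookkeeping at all: mixing of $Y$ enters only to guarantee that the sum $\bigl(L_{\F|_Y}^n\chi_{C_a}\bigr)(x)=\sum_{z\in Y,\,z=az_1\dots z_{n-1}x}f_n(z)$ has at least one term, and then a single term is bounded below by $\min_{z\in Y}f_n(z)$. Iterating \eqref{A1} all the way down to single symbols gives $f_n(y)\ge e^{-(n-1)C}f_1(y)f_1(\sigma y)\cdots f_1(\sigma^{n-1}y)$, and compactness and invariance of $Y$ together with $f_1>0$ yield $\beta=\min_{z\in Y}f_1(z)>0$, hence $f_n\ge \beta^n e^{-(n-1)C}$ uniformly on $Y$; this replaces your power-normalised minimum over finitely many building blocks (which can be made to work, since $C\ge0$ lets you use fewer applications of \eqref{A1} and absorb the bridge constant into $\beta$, but it is more delicate than necessary). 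On the one point where you are more careful than the paper --- the existence of a length-$n$ loop at $a$ inside $Y$ for \emph{every} $n$ --- note that your proposed fix (absorbing short residues into a longer bridge) cannot produce loops of length below the mixing time of $a$ in $Y$, where none may exist; in that regime the sum is empty and the stated bound simply is not available. This is a gap in the statement as written rather than in your argument specifically (the paper's proof tacitly assumes the sum is nonempty), and it is harmless for the way the lemma is used, namely for the large-$n$ behaviour of $\frac1n\log Z_n$; but you should state the bound for $n$ beyond the relevant mixing constant rather than claim the case analysis closes it for all $n$.
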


\begin{proof}
For $x \in Y\cap C_a$, we have that
\begin{eqnarray*}
\left( L_{\F|_Y}^n \chi_{C_a}\right)(x) = \sum_{\sigma^n z=x} f_n|_{Y}(z) \chi_{C_a \cap Y}(z)=
\sum_{z\in Y, z=az_1 \dots z_{n-1} x}  f_n|_{Y}(z).   \end{eqnarray*}
If $x= a \tilde{x}$, we have
\begin{eqnarray*}
\left( L_{\F|_Y}^n \chi_{C_a}\right)(a\tilde{x}) = \sum_{z\in Y, z=az_1 z_2 \dots z_{n-1}a \tilde{x}} f_n|_{Y}(z) \geq \min_{z \in Y} f_n|_{Y}(z).
\end{eqnarray*}
In virtue of equation \eqref{A1} we see that for every $x \in \Sigma$, 
\begin{align*}
f_n(x) &\geq e^{-C}f_{n-1}(\sigma x) f_1(x) \geq e^{-2C}f_{n-2}(\sigma^2 x) f_1(\sigma x)f_1(x)\\
&\geq \dots \\
&\geq e^{-(n-1)C}f_1(\sigma^{n-1}x)f_{1}(\sigma^{n-2}x) \dots f_1(\sigma x)f_1(x).
\end{align*}
Since the space $Y$ is compact and invariant, we set $\beta= \min_{z \in Y}  f_1\vert_Y (z)$. Then, for every $y \in Y$ we have
\[ f_n(y) \geq e^{-(n-1)C}\beta^n.\]
From the above equation, we can conclude that
\[\left( L_{\F|_Y}^n \chi_{C_a}\right)(x) \geq \beta^n e^{-(n-1) C}.\]
The above result together with Lemma \ref{bounds} implies that
\[Z_n(Y, \F,a) \geq \frac{1}{M}   \beta^n e^{-(n-1) C},\]
and so
\[Z_n( \F,a) \geq \frac{1}{M}   \beta^n e^{-(n-1) C}.\]
\end{proof}

 Given $f: \Sigma \to \R$ a continuous function, the \emph{transfer operator} $L_{f}$   applied to  function $g: \Sigma \rightarrow \R$ 
is formally defined  by
\begin{equation} \label{transfer}
\left( L_{f} g \right) (x) := \sum_{\sigma z=x} f(z) g(z) \quad \text{ for every } x\in \Sigma.
\end{equation}
%\begin{rem}
%For $g\in C(\Sigma, \R)$, $L_{f_1}g$ is not continuous in general. In the following lemma, we apply the transfer operator 
%$L_{f_1}$ to a function on $\Sigma$.  
%\end{rem}
In the following lemma, we will make use of the supremum norm of  a continuous function. For a function $g:\Sigma\rightarrow \R$, define  $\|g\|_{\infty}=\sup\{|g(x)|:x\in \Sigma\}$. 
\begin{lema} \label{upper}
Let  $\mathcal{F}= \{ \log f_n \}_{n=1}^{\infty}$ be a Bowen sequence on $\Sigma$ that satisfies equation \eqref{A2}. 
Then  there exists $\tilde{M} >0$ such that for any $a\in S, n\in\N$
\begin{equation*}
Z_n(\F,a) \leq \tilde{M} e^{C(n-1)} \| L_{f_1} 1 \|_{\infty}^{n},
\end{equation*}
where $C$ is defined as in equation  \eqref{A2}.
\end{lema}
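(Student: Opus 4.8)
The plan is to bound $Z_n(\F,a)$ by, in turn, a value of the Ruelle operator $L_\F^n$, then a value of the iterated transfer operator $L_{f_1}^n$, and finally a power of $\|L_{f_1}1\|_\infty$. First I would iterate the almost-additivity bound \eqref{A2}: applying it $n-1$ times gives $f_n(x) \le e^{C(n-1)} f_1(x)f_1(\sigma x)\cdots f_1(\sigma^{n-1}x)$ for every $x\in\Sigma$ and $n\in\N$. Since, iterating \eqref{transfer}, $(L_{f_1}^n 1)(x) = \sum_{\sigma^n z = x} f_1(z)f_1(\sigma z)\cdots f_1(\sigma^{n-1}z)$, this yields $(L_\F^n \chi_{C_a})(x) \le e^{C(n-1)}(L_{f_1}^n 1)(x)$ for every $x\in\Sigma$.

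Next I would record the submultiplicativity $\|L_{f_1}^n 1\|_\infty \le \|L_{f_1}1\|_\infty^n$, proved by an easy induction on $n$: from $(L_{f_1}^n 1)(x) = \sum_{\sigma z = x} f_1(z)(L_{f_1}^{n-1}1)(z) \le \|L_{f_1}^{n-1}1\|_\infty (L_{f_1}1)(x)$ one takes suprema over $x$ (the statement of the lemma is vacuous when $\|L_{f_1}1\|_\infty=\infty$). The remaining, and only genuinely nontrivial, ingredient is a comparison $Z_n(\F,a) \le M\,(L_\F^n \chi_{C_a})(y)$ valid for an arbitrary $y\in C_a$; note $C_a\neq\emptyset$ since $(t_{ij})$ has no zero row or column. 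This is one half of the concatenation argument already used in Lemma \ref{bounds}: a point $\overline x\in C_a$ with $\sigma^n\overline x=\overline x$ is given by an admissible cycle word $w=a\,\overline x_1\cdots\overline x_{n-1}$; writing $y=a\,y_1y_2\cdots$, the point $z_w := (a,\overline x_1,\dots,\overline x_{n-1},a,y_1,y_2,\dots)$ lies in $\Sigma\cap C_a$, satisfies $\sigma^n z_w=y$, and $w\mapsto z_w$ is injective since $z_w$ determines its first $n$ coordinates; since $\overline x$ and $z_w$ agree on their first $n$ coordinates, the Bowen property gives $f_n(\overline x)\le A_n f_n(z_w)\le M f_n(z_w)$, and summing over all admissible $w$ gives $Z_n(\F,a)\le M\sum_w f_n(z_w)\le M\sum_{\sigma^n z=y} f_n(z)\chi_{C_a}(z) = M\,(L_\F^n\chi_{C_a})(y)$.

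Chaining the three inequalities for any fixed $y\in C_a$ gives $Z_n(\F,a)\le M\,(L_\F^n\chi_{C_a})(y)\le M e^{C(n-1)}(L_{f_1}^n 1)(y)\le M e^{C(n-1)}\|L_{f_1}1\|_\infty^n$, so the lemma holds with $\tilde M=M$. I expect the main obstacle to be the third step: one must check carefully that the concatenated sequence $z_w$ remains admissible (using that the cycle word $w$ closes up and that $y$ already begins with $a$) and that $w\mapsto z_w$ is injective, so that the uniform Bowen constant $M$ can be pulled out of the sum term by term. Everything else is routine bookkeeping with \eqref{A2} and the definition of the transfer operator.
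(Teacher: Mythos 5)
Your proposal is correct and follows essentially the same route as the paper's proof: bound $Z_n(\F,a)$ by a value of $L_{\F}^n$ via the Bowen-property concatenation argument (which the paper simply cites as the argument of Lemma \ref{bounds} with $Y$ replaced by $\Sigma$, and which you spell out), then iterate \eqref{A2} to compare with $L_{f_1}^n 1$, and finish with $\| L_{f_1}^n 1\|_{\infty}\leq \|L_{f_1}1\|_{\infty}^n$. Your use of $L_{\F}^n\chi_{C_a}$ instead of $L_{\F}^n 1$ is a harmless (slightly tighter) variant, since $f_n>0$.
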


\begin{proof}
Let $a\in S$. In order to prove this lemma, it suffices to show that there exists $\tilde{M} >0$ such that for every $x\in \Sigma$,
\begin{equation*}
Z_n(\F,a) \leq  \tilde{M} \left(L_{\F}^n 1  \right)(x) \leq   \tilde{M} \left(L_{f_1}^n 1  \right)(x) e^{C(n-1)} \leq   
\tilde{M}e^{C(n-1)}\| L_{f_1} 1 \|_{\infty}^{n}.
\end{equation*}
We can show the first inequality using a similar argument as the one used in the proof of Lemma \ref{bounds} (replace $Y$ by $\Sigma$).

Now we show  the second inequality. Note that if $x=a \tilde{x} \in C_a$, we have that
\[\left(L_{\F}^n 1  \right)(x) = \sum_{\sigma^n z = a \tilde{x}}   f_n (z)= \sum_{z\in \Sigma, z=z_0 z_1 \dots z_{n-1} a \tilde{x}} f_n(z).\]
From  equation \eqref{A2}, we have
\begin{align}\label{it}
f_n(z) &\leq e^C f_1(z) f_{n-1}(\sigma z)  \leq e^{2C} f_1(z) f_1(\sigma z) f_{n-2}(\sigma^2 z) &\\
&\leq  e^{(n-1)C} f_1(z) f_1(\sigma z) \dots  f_1(\sigma^{n-1} z).
 \end{align}
On the other hand, the iterations of the transfer operator (see equation \eqref{transfer}) 
\begin{equation}\label{nice}
\left( L_{f_1}^n 1 \right)(x) = \sum_{i_n \dots i_1 x\in \Sigma} f_1(i_1 x)  f_1(i_2 i_1 x) \dots  f_1(i_n \dots i_1 x).
\end{equation}
Therefore,
\begin{align*}
\left(L_{\F}^n 1  \right)(a\tilde{x}) &= \sum_{\sigma^{n}z= a \tilde{x}} f_n(z)
\leq \sum_{\sigma^{n}z= a \tilde{x}} f_1(z) \dots f_1(\sigma^{n-1}z) e^{(n-1)C} \text{ (by (\ref{it}))} \\
& = \sum_{z\in \Sigma, z=z_0 z_1 \dots z_{n-1} a \tilde{x}} f_1(z_{n-1} a\tilde{x})  f_1(z_{n-2}z_{n-1} a\tilde{x})\dots f_1(z_0 \dots z_{n-1} a \tilde{x}) e^{(n-1)C} \\
&= e^{(n-1)C}L_{f_1}^n 1 (a \tilde{x}) \text{ (by (\ref{nice})) },
\end{align*}
which implies the second inequality.
Finally, a direct computation shows that 
\[\left( L_{f_1}^n 1  \right)(x) \leq  \|  L_{f_1} 1  \|_{\infty}^n \textnormal{ for every } x\in C_{a}.\] 
\end{proof}

\begin{lema}
Let  $\mathcal{F}= \{ \log f_n \}_{n=1}^{\infty}$ be an almost-additive Bowen sequence on $\Sigma$. Then for $a\in S$,
\begin{enumerate}
\item The limit
\[\lim_{n \to \infty} \frac{1}{n} \log Z_n(\F,a) \]
exists and it is not minus infinity.
\item If $ \|  L_{f_1} 1  \|_{\infty} < \infty$, then
\[\lim_{n \to \infty} \frac{1}{n} \log Z_n(\F,a) \neq \infty.\]
\end{enumerate}
\end{lema}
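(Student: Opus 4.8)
The plan is to establish the existence of the limit in (1) via a standard super-additivity argument, and then to obtain the finiteness bound in (2) directly from the estimates already proved. The first observation is that Lemma \ref{1} gives, for the sequence $b_n := \log Z_n(\F,a)$, the inequality $b_n + b_m \leq b_{n+m} + k$, that is, $(b_n - k)$ is super-additive (equivalently, $(-b_n + k)$ is sub-additive). Hence by Fekete's lemma applied to $c_n := -(b_n - k) = k - b_n$, which is sub-additive, the limit $\lim_{n\to\infty} c_n/n$ exists in $[-\infty,\infty)$ and equals $\inf_n c_n/n$; translating back, $\lim_{n\to\infty} b_n/n$ exists in $(-\infty,+\infty]$ and equals $\sup_n (b_n - k)/n = \sup_n b_n/n$. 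This is where I would invoke Lemma \ref{1}.

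Next I would rule out the value $-\infty$. Fix a topologically mixing finite state Markov shift $Y \subset \Sigma$ with $Y \cap C_a \neq \emptyset$ (such $Y$ exists because $(\Sigma,\sigma)$ is topologically mixing and $S$ is nonempty: take $Y$ to be the subshift on a finite subalphabet containing $a$ chosen so that $Y\cap C_a\neq\emptyset$ — concretely one can use a mixing two-symbol subshift built from $a$ and one of its forward iterates). Lemma \ref{3} then yields $Z_n(\F,a) \geq \frac{1}{M}\beta^n e^{-(n-1)C}$ with $\beta>0$, so
\[
\frac{1}{n}\log Z_n(\F,a) \geq -\frac{1}{n}\log M + \log\beta - \frac{n-1}{n}\,C,
\]
and the right-hand side is bounded below as $n\to\infty$; in particular the limit, which we already know exists, cannot be $-\infty$. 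This proves (1).

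For (2), assume $\|L_{f_1} 1\|_\infty < \infty$. Lemma \ref{upper} gives $Z_n(\F,a) \leq \tilde M e^{C(n-1)} \|L_{f_1} 1\|_\infty^n$, whence
\[
\frac{1}{n}\log Z_n(\F,a) \leq \frac{1}{n}\log \tilde M + \frac{n-1}{n}\,C + \log \|L_{f_1} 1\|_\infty,
\]
and letting $n\to\infty$ shows $\lim_{n\to\infty}\frac1n\log Z_n(\F,a) \leq C + \log\|L_{f_1}1\|_\infty < \infty$. Combined with (1) this completes the proof.

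The only genuinely delicate point is the verification that a suitable finite-state mixing subshift $Y$ with $Y\cap C_a\neq\emptyset$ exists, so that Lemma \ref{3} (and implicitly Lemma \ref{bounds}) can be applied; this is a purely combinatorial matter that follows from topological mixing of $(\Sigma,\sigma)$, and I expect it to be the main (though minor) obstacle. Everything else is an application of Fekete's lemma plus the two-sided bounds on $Z_n(\F,a)$ established in Lemmas \ref{1}, \ref{3}, and \ref{upper}. One should also note that the independence of the limit from the choice of $a\in S$ — needed for $P(\F)$ to be well defined — follows by the same concatenation argument underlying Lemma \ref{1}, comparing $Z_n(\F,a)$ and $Z_n(\F,b)$ using the topological mixing connecting words; I would include this remark at the end.
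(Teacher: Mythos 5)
Your proposal is correct and follows essentially the same route as the paper: the paper's proof of this lemma is precisely the combination of Lemma \ref{1} (almost super-additivity of $\log Z_n(\F,a)$, hence Fekete), Lemma \ref{3} (the lower bound $Z_n(\F,a)\geq \frac{1}{M}\beta^n e^{-(n-1)C}$ ruling out $-\infty$), and Lemma \ref{upper} (the upper bound giving part (2)), which you have merely written out in more detail, including the construction of a mixing finite-state $Y$ with $Y\cap C_a\neq\emptyset$ that the paper leaves implicit. The only caveat is your incidental identity $\sup_n(b_n-k)/n=\sup_n b_n/n$, which need not hold in general (one only gets $\lim_n b_n/n=\sup_n(b_n-k)/n\leq\sup_n b_n/n$), but this remark plays no role in the statement being proved.
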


\begin{proof}
The fact that the limit exists follows from Lemma \ref{1}. 
It follows from Lemmas \ref{1} and \ref{3} %and  the condition assumed in equation \eqref{A1} %
that the limit is not minus infinity. 
The second part of the lemma is a consequence of Lemma \ref{upper}. % together with the condition assumed in equation \eqref{A2}.$
  \end{proof}

\begin{prop}\label{indp}
Let  $\mathcal{F}= \{ \log f_n \}_{n=1}^{\infty}$ be  an almost-additive Bowen sequence on $\Sigma$. Then
the limit
\[\lim_{n \to \infty} \frac{1}{n} \log Z_n(\F,a) \]
is independent of the symbol $a\in S$.
\end{prop}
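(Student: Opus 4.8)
The plan is to compare the partition functions $Z_n(\F,a)$ and $Z_n(\F,b)$ for two distinct symbols $a,b \in S$ by inserting fixed connecting words coming from the topological mixing hypothesis, and to use the almost-additivity (and Bowen) bounds to control the extra factors so that they contribute at most a linear-in-$n$ error, which disappears after dividing by $n$ and taking the limit.

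Concretely, by topological mixing choose $N_{ab}$ and $N_{ba}$ and fix admissible words $u = a u_1 \cdots u_{p-1}$ and $v = b v_1 \cdots v_{q-1}$ (with $u$ ending so that it can be followed by $b$, and $v$ ending so that it can be followed by $a$), where $p$ and $q$ are fixed integers depending only on $a,b$. Given a periodic point $x \in C_a$ with $\sigma^n x = x$, write $A = x_0 \cdots x_{n-1}$ for the corresponding word and form the concatenated periodic point
\[
x' = (A \, u \, v)^{\infty} \in C_a, \qquad \sigma^{\,n+p+q} x' = x'.
\]
Wait — I actually want to land in $C_b$, so instead form $y' = (u^{-}\!A\,v')^{\infty}$ — more carefully: fix a word $w_{ba}$ of some fixed length $r$ starting at $b$ and ending compatibly before $a$, and set $y' = (w_{ba}\,A\,)^{\infty}$ if that is admissible, a periodic point in $C_b$ of period $n+r$. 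The map $x \mapsto y'$ is injective on periodic points in $C_a$ of period $n$. Using \eqref{A1} and \eqref{A2} repeatedly (as in Lemma~\ref{1}) together with the Bowen bound $A_n \le M$, one gets
\[
f_n(x) \le D \, f_{n+r}(y'),
\]
where $D = D(a,b)$ is a constant independent of $n$ (it absorbs $e^{Cr}$, the factor $M^2$ from replacing values on words agreeing with $x$ and $y'$, and $\min$/$\max$ of the $f_1$-values along the fixed word $w_{ba}$, controlled via Lemma~\ref{3}-type estimates). Summing over all such periodic $x$ yields $Z_n(\F,a) \le D \, Z_{n+r}(\F,b)$.

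Dividing by $n$, taking $\liminf$ (the limit exists by Lemma~\ref{1}, but working with $\liminf$/$\limsup$ is harmless), and using that $r$ is fixed gives $\lim_n \frac1n \log Z_n(\F,a) \le \lim_n \frac1n \log Z_n(\F,b)$. By symmetry — swapping the roles of $a$ and $b$ and using a fixed connecting word $w_{ab}$ from $a$ to $b$ — one gets the reverse inequality, hence equality. This handles the case where the common value is finite; if it is $+\infty$ the same chain of inequalities shows it is $+\infty$ for every symbol, and Lemma~\ref{3} already guarantees it is never $-\infty$, so no separate argument is needed there.

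The main obstacle is bookkeeping the constant $D$: one must check it truly depends only on $a,b$ and not on $n$. The factors $e^{Cr}$ and $M$ are uniform by hypothesis, but the terms $f_1(\sigma^j w_{ba}\cdots)$ appearing when one estimates $f_r$ on the inserted block must be bounded below away from $0$ uniformly; this is where the argument of Lemma~\ref{3} (giving $f_r(z) \ge e^{-(r-1)C}\beta^r$ on a compact mixing subshift $Y$ containing the relevant words) is used, after first restricting attention to such a $Y$ via Lemma~\ref{bounds}. A minor subtlety is admissibility of the concatenations: one needs the fixed word $w_{ba}$ to begin with $b$, end with a symbol $c$ with $t_{c a}=1$, and be long enough — all guaranteed by topological mixing applied to the pair $(b,a)$ — and likewise $w_{ab}$ for the reverse direction. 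Once these fixed words are chosen, the estimate is uniform in $n$ and the conclusion follows.
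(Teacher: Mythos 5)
Your overall strategy is exactly the paper's: map each periodic point $x\in C_a$ of period $n$ to a periodic point in $C_b$ of period $n+O(1)$ by splicing in fixed connecting words, compare $f_n(x)$ with the value of $f$ on the new point via \eqref{A1}, \eqref{A2} and the Bowen bound $M$, and conclude $Z_n(\F,a)\leq D\,Z_{n+r}(\F,b)$ with $D$ independent of $n$. However, the construction you settle on has a genuine admissibility gap. With $y'=(w_{ba}A)^{\infty}$, the junction \emph{into} $A$ is fine (you arrange the last symbol $c$ of $w_{ba}$ to satisfy $t_{ca}=1$), but at the wrap-around of the period the last symbol of $A$, namely $x_{n-1}$, must be followed by the first symbol of $w_{ba}$, which is $b$; periodicity of $x$ only gives $t_{x_{n-1}a}=1$, and topological mixing gives no fixed word of fixed length that can follow \emph{every} possible $x_{n-1}$ (over a countable alphabet the required connecting word and its length vary with $x_{n-1}$, which would destroy the uniform constant $D$ and the fixed shift $r$). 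Your closing remark on admissibility only re-checks the junction $w_{ba}\to a$, so the hedge ``if that is admissible'' is never discharged. The repair is precisely what the paper does (and what your abandoned first attempt $(Auv)^{\infty}$ almost was, up to a cyclic rotation landing the point in $C_b$): insert \emph{two} fixed words, one from $b$ to $a$ before $A$ and one from $a$ to $b$ after $A$, i.e.\ take $\tilde x=(b\,y_1\dots y_{k-1}\,a\,x_1\dots x_{n-1}\,a\,z_1\dots z_{k-1})^{\infty}$; then every junction uses either the chosen words or $t_{x_{n-1}a}=1$, and the same estimate chain gives $f_n(x)\leq D f_{n+2k}(\tilde x)$, hence $Z_n(\F,a)\leq D\,Z_{n+2k}(\F,b)$.

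A secondary point: the detour through Lemma \ref{bounds} and Lemma \ref{3} to bound the inserted block from below is unnecessary and, as phrased, not quite applicable, since the spliced point $\tilde x$ does not lie in a compact subshift $Y$ (its $A$-part ranges over all symbols), so the bound $\beta=\min_{z\in Y}f_1\vert_Y(z)$ does not apply to it directly. The paper's simpler route suffices: since the connecting words are fixed, $f_k$ on the fixed cylinders $C_{by_1\dots y_{k-1}}$ and $C_{az_1\dots z_{k-1}}$ is bounded below by the (positive) value at any one chosen point, up to the Bowen constant $M$; these constants depend only on $a,b$. With the two-word splice and this observation, your argument matches the paper's proof, including the treatment of the infinite-pressure case.
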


\begin{proof}
It is independent of the symbol $a \in S$ because $(\Sigma, \sigma)$ is topologically mixing. Indeed, we will prove that given any two symbols $a,b \in S$ there exist constants
$C'>0$  and $k(a,b) \in \N$ such that
\[Z_{n}(\F, a) \leq C' Z_{n+2k(a,b)}(\F, b),\]
from which the result follows.

Let $x=(a, x_1, x_2, \dots, x_{n-1}, a ,\dots) \in C_a$ be a periodic point of period $n$, that is  $\sigma^n x=x$. Since $\Sigma$ is topologically mixing, 
there exists $N_{ba} \in \N$ such that for any $m \geq N_{ba}$ there exists an admissible
word of length $(m-1)$ given by $by_1 \dots y_{m-2}$ such that $by_1 \dots y_{m-2}a$ is
an admissible  word of length $m$. Similarly, there exists $N_{ab} \in \N$ such that for any $m \geq N_{ab}$ there exists an admissible  word of length $(m-1)$ given 
by $az_1 \dots z_{m-2}$ such that $az_1 \dots z_{m-2}b$ is an admissible  word of length $m$. Set
$k= \max \{ N_{ba}, N_{ab} \}$ and $m=k+1$. Now consider the periodic point $\tilde{x} \in \Sigma$ satisfying $\sigma^{n+2k} \tilde{x} =\tilde{x}$, where
\[\tilde{x} = (b,y_1, \dots, y_{m-2},a,x_1, x_2, \dots, x_{n-1}, a,z_{1}, \dots, z_{m-2}, b, \dots).\]
Note that
\begin{equation}\label{it1}
f_n(x) = \frac{f_n(x)}{f_{n+2k} (\tilde{x})}f_{n+2k} (\tilde{x}).
\end{equation}
Since $\F$ satisfies equation \eqref{A1}, we have that
\begin{align*}
\frac{f_n(x)}{f_{n}(\sigma^k \tilde{x})} &\geq \frac{f_n(x) f_k(\tilde{x})}{f_{n+k}(\tilde{x}) e^C} \geq
\frac{f_n(x) f_k(\tilde{x}) f_k(\sigma^{n+k} \tilde{x})}{e^{2C} f_{n+2k}(\tilde{x})}\\
&\geq \left( \frac{f_n(x)}{e^{2C} f_{n+2k}(\tilde{x})}  \right)
\frac{1}{M^2} \sup \left\{ f_k(u) : u \in C_{by_1 \dots y_{k-1}} \right\}
\sup \left\{ f_k(u) : u \in C_{az_1 \dots z_{k-1}} \right\}.
 \end{align*}
Recall that $f_k>0$. Therefore, there exist positive constants $C_1, C_2$ such that
\[C_1 \leq \sup \left\{ f_k(u) : u \in C_{by_1 \dots y_{k-1}} \right\} \textrm{ and }
C_2 \leq  \sup \left\{ f_k(u) : u \in C_{az_1 \dots z_{k-1}} \right\}.\]
Hence
\begin{eqnarray*}
\frac{f_n(x)}{f_{n+2k}(\tilde{x})} \leq \frac{e^{2C}M^2 }{C_1 C_2} \frac{f_n(x)}{f_{n}(\sigma^k \tilde{x})}.
\end{eqnarray*}
Therefore, using inequality (\ref{it1}) 
\begin{equation*}
f_n(x) \leq \frac{e^{2C} M^3 }{C_1 C_2}f_{n+2k}(\tilde{x}).
 \end{equation*}
Thus there exists a constant $C' >0$ such that
\[ \sum_{\sigma^n x=x} f_n(x) \chi_{C_a}(x) \leq C'  \sum_{\sigma^{n+2k} x=x} f_{n+2k}(x) \chi_{C_b}(x).\]\end{proof}

\begin{rem}
Let us stress that Lemma \ref{upper}  holds under the assumption that the Bowen sequence $\F$ satisfies equation (\ref{A2}). 
On the other hand, Lemmas \ref{1} and \ref{3} hold when the Bowen sequence $\F$ satisfies equation (\ref{A1}).
\end{rem}

\section{The variational principle} \label{sec:var}
One of the major results in the classical thermodynamic formalism is that the topological 
pressure satisfies the variational principle \cite{w1}. It states that  if we consider a dynamical system  defined on a compact metric  space and a continuous function $\phi$ the following equality holds,
\begin{equation} \label{ip}
P(\phi)= \supÊ\left\{ h(\mu) + \int \phi \ d\mu : \mu \in \M  \right\}.
\end{equation}
In this context, the topological pressure is defined by means of 
$(n, \epsilon)-$separated sets (see \cite[Chapter 9]{w2}). This notion depends upon the metric.
Since in the compact setting all the metrics generating the same topology are equivalent,
the value of the pressure does not  depend upon the metric. Recently, following the same approach, Cao, Feng and Huang \cite{cao} defined the pressure and proved the variational principle for sub-additive sequences of continuous functions defined on a compact metric space.
Under different assumptions, Barreira \cite{b2}, Falconer \cite{f} and Mummert \cite{m} have also obtained variational principles.

In the non-compact setting, the definition of topological pressure obtained using $(n, \epsilon)-$separated sets has 
several problems. Most notably, different metrics generating the same topology can yield different values for the 
topological pressure. Let us stress that the right hand side of the equality \eqref{ip} only depends on the 
Borel structure of the space and not on the metric. Therefore, a notion of pressure 
satisfying the variational principle need not depend upon the metric of the space. 

The definition we proposed in the previous section does not depend on the metric. We stress again that it is both a 
generalisation of the notion introduced by Sarig \cite{s1} and 
of a formula for the pressure of almost-additive sequences  obtained by Barreira \cite{b2} and Mummert \cite{m} (both of which satisfy a 
version of equality (\ref{ip})). The main result of this section is that the almost-additive Gurevich pressure 
satisfies the variational principle. We also prove that it is well approximated by its restriction to compact  
invariant sets.

Recall that, when  $(\Sigma, \sigma)$ is a topologically mixing finite state Markov shift and $\F$ is an almost-additive sequence on $\Sigma$ satisfying the tempered condition 
$\lim_{n\rightarrow \infty}(\log A_n)/n=0$ (where $A_n$ is defined in Definition \ref{bowen}), Barreira \cite[Theorem 2]{b2} proved that 
the pressure he defined 
satisfies the following formula:
\[P(\F) = \lim_{n \to \infty} \frac{1}{n} \log  \left(\sum_{\sigma^n x= x} f_n(x)\right).  \]
Thus,
\[P(\F) = \lim_{n \to \infty} \frac{1}{n} \log  \left(\sum_{\sigma^n x= x} f_n(x)\right) =  \lim_{n \to \infty} \frac{1}{n} \log  \left(\sum_{\sigma^n x= x} f_n(x)\chi_{C_a}(x)\right). \]
We prove now  that the almost-additive pressure defined over a countable Markov shift can be approximated by the pressure on finite Markov shifts.

\begin{prop}\label{approx} 
Let $(\Sigma, \sigma)$ be a topologically mixing countable state Markov shift and  $\mathcal{F}= \{ \log f_n \}_{n=1}^{\infty}$ be an almost-additive 
Bowen sequence on $\Sigma$. Then
\begin{equation*}
P(\F)= \sup \{ P(\F\vert_Y) : Y \subset \Sigma \textrm{ a topologically mixing finite state Markov shift} \}.
\end{equation*}
\end{prop}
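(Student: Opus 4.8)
The plan is to prove two inequalities. The inequality $P(\F) \geq \sup\{P(\F|_Y)\}$ is the easy direction: for any topologically mixing finite state Markov shift $Y \subset \Sigma$ containing the symbol $a$, every periodic orbit in $Y \cap C_a$ is also a periodic orbit in $\Sigma \cap C_a$, and since $f_n|_Y = f_n$ on those points, we get $Z_n(Y,\F,a) \leq Z_n(\F,a)$ for every $n$. Taking $\frac{1}{n}\log(\cdot)$ and letting $n \to \infty$ yields $P(\F|_Y) \leq P(\F)$, so the supremum over all such $Y$ is $\leq P(\F)$. (One should take the supremum over $Y$ containing a fixed symbol $a$; since $(\Sigma,\sigma)$ is topologically mixing this is harmless, and $P(\F)$ is independent of $a$ by Proposition \ref{indp}.)

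The reverse inequality $P(\F) \leq \sup\{P(\F|_Y)\}$ is the heart of the matter. Fix $a \in S$. By definition, $Z_n(\F,a) = \sum_{\sigma^n x = x} f_n(x)\chi_{C_a}(x)$ is a sum of countably many terms, so for any $\epsilon > 0$ there is a finite subset $S_0 \subset S$ (with $a \in S_0$) and a finite truncation such that the partial sum over periodic points using only symbols in $S_0$ is close to $Z_n(\F,a)$; however, the obstacle is that this must be done uniformly in $n$, which a naive truncation does not give. The standard remedy (following Sarig's approach for the Gurevich pressure of a single function) is: first observe that since the limit defining $P(\F)$ exists, it equals $\limsup_n \frac{1}{n}\log Z_n(\F,a)$, and it also equals $\sup_n \frac{1}{n}\log(Z_n(\F,a) e^{-k})$ up to the additive constant coming from Lemma \ref{1} — more precisely, from the super-multiplicativity $Z_n Z_m \leq Z_{n+m} e^k$ and Fekete's lemma, $P(\F) = \sup_n \frac{1}{n}\bigl(\log Z_n(\F,a) + k\bigr) \cdot \frac{1}{1}$, so $P(\F)$ is approximated arbitrarily well by $\frac{1}{n_0}\log Z_{n_0}(\F,a)$ for suitable large $n_0$. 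Then I would fix such an $n_0$ with $\frac{1}{n_0}\log Z_{n_0}(\F,a) > P(\F) - \epsilon$, and choose a finite set $S_0 \ni a$ of symbols so that the partial sum $Z_{n_0}^{S_0}(\F,a) := \sum_{\sigma^{n_0}x=x,\ x_i \in S_0} f_{n_0}(x)\chi_{C_a}(x)$ still satisfies $\frac{1}{n_0}\log Z_{n_0}^{S_0}(\F,a) > P(\F) - \epsilon$.

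The remaining step is to enlarge $S_0$ to obtain a topologically mixing finite state Markov subshift $Y$ and to compare $P(\F|_Y)$ with $\frac{1}{n_0}\log Z_{n_0}^{S_0}(\F,a)$. Using topological mixing of $(\Sigma,\sigma)$, for every pair of symbols in $S_0$ there is a connecting admissible word; adding the (finitely many) symbols appearing in these connecting words produces a finite set $S_1 \supset S_0$ whose induced subshift $Y := \Sigma_{S_1}$ (restricting the transition matrix to $S_1 \times S_1$) is topologically mixing and contains $C_a \cap Y \neq \emptyset$, and all the period-$n_0$ orbits counted in $Z_{n_0}^{S_0}(\F,a)$ lie in $Y$, so $Z_{n_0}(Y,\F,a) \geq Z_{n_0}^{S_0}(\F,a)$. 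Now I invoke Lemma \ref{1} applied to $\F|_Y$ (which is still a Bowen sequence satisfying \eqref{A1}): $Z_{n_0}(Y,\F,a)^N \leq Z_{Nn_0}(Y,\F,a)\, e^{(N-1)k}$, whence
\begin{equation*}
P(\F|_Y) = \lim_{N\to\infty} \frac{1}{Nn_0}\log Z_{Nn_0}(Y,\F,a) \geq \frac{1}{n_0}\log Z_{n_0}(Y,\F,a) - \frac{k}{n_0} \geq \frac{1}{n_0}\log Z_{n_0}^{S_0}(\F,a) - \frac{k}{n_0}.
\end{equation*}
Combining, $P(\F|_Y) \geq P(\F) - \epsilon - \frac{k}{n_0}$. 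The constant $k$ is fixed (it comes from $C$ and $M$, independent of $Y$ once $\F$ is fixed — one must check the Bowen constant $M$ and almost-additivity constant $C$ of $\F|_Y$ are inherited from those of $\F$), so choosing $n_0$ large makes $k/n_0$ negligible; letting $\epsilon \to 0$ gives $\sup_Y P(\F|_Y) \geq P(\F)$. The main obstacle is the uniformity issue: ensuring that truncating to finitely many symbols at a single well-chosen scale $n_0$ suffices, which is exactly where super-multiplicativity (Lemma \ref{1}) and Fekete's lemma are used to reduce the limit to a supremum over finite scales.
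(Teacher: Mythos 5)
Your proposal is correct and follows essentially the same route as the paper: the easy inequality by monotonicity of the partition functions, and the reverse inequality by fixing one large scale where $Z_{n_0}(\F,a)$ nearly realizes $P(\F)$, truncating the countable sum to finitely many symbols, enlarging to a topologically mixing finite subshift via mixing of $\Sigma$, and then using the super-multiplicativity of Lemma \ref{1} restricted to $Y$ to absorb the single-scale estimate into $P(\F\vert_Y)$ up to an error $k/n_0$ (the paper's $n=km+r$ argument with $m>C/\epsilon$). The only blemishes are cosmetic: the Fekete identity should read $P(\F)=\sup_n \frac{1}{n}\left(\log Z_n(\F,a)-k\right)$ rather than $+k$ (inessential, since only the existence of the limit is used to choose $n_0$), and the case $P(\F)=\infty$ needs the obvious one-line modification, exactly as in the paper.
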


\begin{proof} 
Let $Y \subset \Sigma$ be a topologically mixing finite state Markov shift. Then clearly
\begin{align*}
 P(\F |_Y) &= \lim_{n \to \infty} \frac{1}{n} \log \left(\sum_{\sigma^n x= x} f_n(x) \chi_{(Y \cap C_a)}(x) \right) \\
 &\leq \lim_{n \to \infty} \frac{1}{n} \log \left(\sum_{\sigma^n x= x} f_n(x)\chi_{C_a}(x)\right) = P(\F).
 \end{align*}
Therefore
\begin{equation*}
P(\F) \geq \sup \{ P(\F\vert_Y) : Y \subset \Sigma \textrm{ a topologically mixing finite state Markov shift} \}.
\end{equation*}
In order to prove the reverse inequality, we assume that $P(\F) <\infty$.  The other case can be proved in a similar way.
The following proof is  close to that of Theorem 2 \cite{s1} and we identify a countable alphabet $S$ with $\N$. Since $\F$ is an almost-additive Bowen sequence, let us assume 
that $\F$ satisfies equations (\ref{A1}), (\ref{A2}) and (\ref{bowenbound}). 
We have  
\begin{equation*}
P(\F)=\lim_{n\rightarrow \infty}\frac{1}{n}\log Z_n(\F,a)=\sup_{n}\frac{1}{n}\log Z_n(\F,a). 
\end{equation*}
Hence given $\epsilon >0$, there exists $m>{C}/{\epsilon}$ (where $C$ is defined as in Definition \ref{aaa}) $m \in \N$ such that
\begin{equation*}
P(\F) < \frac{1}{m} \log Z_m(\F, a) + \epsilon.
\end{equation*}
On the other hand, note that every periodic orbit belongs to a set of the form
$\Sigma \cap \Sigma_M$, where $\Sigma_M$ is the full-shift on $M$ symbols. Therefore,
\[Z_m(\F, a) = \lim_{M \to \infty} Z_m(\Sigma_M \cap \Sigma , \F, a).\]
Clearly the sequence in the limit is increasing in $M$. Hence, %using again the definition of supremum,%
given 
$\epsilon >0$ there exists $M \in \N$ such that
\[\frac{1}{m} \log Z_m(\F, a)< \frac{1}{m} \log Z_m(\Sigma_M \cap \Sigma,\F, a) + \epsilon.\]
Adding a finite number of states to $\{1, 2, \dots , M\}$, it is possible to construct a topologically mixing (finite state) Markov shift $Y \subset \Sigma$ such that
\[\frac{1}{m} \log Z_m(\F, a)< \frac{1}{m} \log Z_m(Y,\F, a) + \epsilon.\]
Indeed, this follows since for every symbol $i,j$ belonging to %the alphabet of $Y$,%
the set $\{1,2,\cdots, M\}$, 
there exists $\{b_1^{ij}, b_2^{ij}, \dots , b_{n(i,j)}^{ij}\} \subset \N$ such that the %following%
word  
$i b_1^{ij}b_2^{ij} \dots  b_{n(i,j)}^{ij} j$ is an admissible word in $\Sigma$. 
Adding all the symbols $b_m^{ij} \in \N$ obtained this way and taking the closure on $\Sigma$, 
we obtain the required topologically mixing finite state Markov shift $Y$.

Now set $a_n=\log Z_n(Y, F, a)$. Then, by equation (\ref{A1}), $a_n+a_m\leq a_{n+m}+C$. Letting $n=km+r$, for $r=0,1,\dots, k-1$, we obtain
\begin{equation*}
\frac{ka_m+a_r}{km+r}\leq \frac{a_{km+r}+kC}{km+r}\leq \frac{a_n}{n}+\epsilon. 
\end{equation*}
Letting $n\rightarrow \infty$, we obtain
\begin{equation*}
\frac{1}{m} \log Z_m(Y,\F, a) \leq P(F\vert_Y) + \epsilon.
\end{equation*}
Therefore, we have
\[P(\F) \leq P(Y, \F) + 3 \epsilon.\]
Hence
\begin{equation*}
P(\F) \leq \sup \{ P(\F\vert_Y) : Y \subset \Sigma \text{ a topologically mixing finite state Markov shift }\}.
\end{equation*}
\end{proof}

\begin{rem}
We stress that in the proof we only used equation (\ref{A1}).
\end{rem}

The following result is a consequence of Proposition \ref{approx}.
\begin{coro}
Let $(\Sigma, \sigma)$ be a topologically mixing countable state Markov shift and $\mathcal{F}= \{ \log f_n \}_{n=1}^{\infty}$ be an almost-additive Bowen sequence on 
$\Sigma$. Then
\begin{equation*}
P(\F)= \sup \{ P(\F\vert_K) : K \subset \Sigma \textrm{ compact and } \sigma^{-1}(K)=K \}.
\end{equation*}
\end{coro}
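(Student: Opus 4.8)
The plan is to deduce this corollary directly from Proposition \ref{approx} by showing that the two suprema in question coincide. The inequality
\[
\sup \{ P(\F\vert_Y) : Y \textrm{ a topologically mixing finite state Markov shift} \} \leq \sup \{ P(\F\vert_K) : K \subset \Sigma \textrm{ compact, } \sigma^{-1}(K)=K \}
\]
is the easy direction: any topologically mixing finite state Markov shift $Y \subset \Sigma$ is a compact $\sigma$-invariant subset of $\Sigma$ (it is a closed subset of $\Sigma$ carrying the induced subshift dynamics, hence compact, and it satisfies $\sigma^{-1}(Y) = Y$ since $Y$ is itself a two-sided-style Markov shift, or more precisely is $\sigma$-invariant by construction). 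Combined with Proposition \ref{approx}, this already gives $P(\F) \leq \sup\{P(\F\vert_K)\}$.

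For the reverse inequality, I would first observe that for \emph{any} compact $\sigma$-invariant $K \subset \Sigma$ we have $P(\F\vert_K) \leq P(\F)$. This follows from the definition of the almost-additive Gurevich pressure together with the fact that every $\sigma$-periodic point of $K$ is also a $\sigma$-periodic point of $\Sigma$ lying in some cylinder; more carefully, one notes that $K$, being compact in the cylinder topology, meets only finitely many $1$-cylinders, so $K$ is contained in a finite state Markov shift $Y_K \subset \Sigma$ (take the closure of the union of the admissible words appearing in $K$; this is a finite-alphabet subshift of finite type, though possibly not topologically mixing). Then $P(\F\vert_K) \leq P(\F\vert_{Y_K})$ by comparing partition functions restricted to the relevant cylinder, and one handles the mixing requirement on $Y_K$ by the same device used in the proof of Proposition \ref{approx}: add finitely many states and words of $\Sigma$ to make the resulting finite subshift topologically mixing, which only increases the pressure. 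Hence $P(\F\vert_K) \leq \sup\{P(\F\vert_Y) : Y \textrm{ topologically mixing finite state}\} = P(\F)$, and taking the supremum over $K$ gives $\sup\{P(\F\vert_K)\} \leq P(\F)$.

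Putting the two inequalities together with Proposition \ref{approx} yields $P(\F) = \sup\{P(\F\vert_K) : K \subset \Sigma \textrm{ compact}, \sigma^{-1}(K)=K\}$, as claimed.

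The main obstacle I anticipate is purely bookkeeping: making precise the claim that a compact $\sigma$-invariant subset of a countable Markov shift is contained in a finite state sub-Markov-shift, and that restricting the pressure to it is dominated by restricting to that finite shift. One must be slightly careful about whether $P(\F\vert_K)$ is even defined for a general compact invariant $K$ that is not itself a subshift of finite type — but since such a $K$ sits inside a finite alphabet, its partition function $Z_n(\F\vert_K, a) = \sum_{\sigma^n x = x, x \in K} f_n(x)\chi_{C_a}(x)$ is a finite sum dominated term-by-term by $Z_n(\F\vert_{Y_K}, a)$, so the inequality $P(\F\vert_K) \leq P(\F\vert_{Y_K})$ is immediate once one checks that periodic points of $K$ are periodic points of $Y_K$. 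The topologically-mixing upgrade is then exactly the argument already carried out in the proof of Proposition \ref{approx}, so no genuinely new idea is needed.
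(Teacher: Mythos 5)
Your overall route is the one the paper intends: the corollary is stated there as an immediate consequence of Proposition \ref{approx}, with one inequality coming from the fact that topologically mixing finite state Markov shifts are compact invariant subsets, and the other from the fact that every compact invariant set sits inside such a finite shift. Two points need attention, though. First, the assertion that a finite state sub-Markov shift $Y$ satisfies $\sigma^{-1}(Y)=Y$ is false in general: $\sigma^{-1}(Y)$ contains every point $ay$ with $y\in Y$ and $ay$ admissible in $\Sigma$, so one only has $Y\subseteq\sigma^{-1}(Y)$, i.e.\ $\sigma(Y)\subseteq Y$. Read literally, the condition $\sigma^{-1}(K)=K$ would exclude every proper finite sub-shift (in the full shift on $\N$ the only compact set with $\sigma^{-1}(K)=K$ is the empty set), so the corollary must be --- and implicitly is --- read with $K$ forward invariant; under that reading your first inequality is fine, but the justification as written is not.

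Second, and more substantively, your reverse inequality hinges on what $P(\F\vert_K)$ means for a compact invariant $K$ that is not a topologically mixing finite state Markov shift. If you define it by the periodic-point partition function $Z_n(\F\vert_K,a)$, then the detour through $Y_K$ is unnecessary (that partition function is dominated by $Z_n(\F,a)$ directly), but this is not the pressure the corollary is about: the limit need not exist, it can depend on $a$, and $K$ may contain no periodic points at all (for instance a minimal non-periodic subshift supported on finitely many symbols), in which case your $P(\F\vert_K)$ is $-\infty$ while the almost-additive pressure of the compact system $(K,\sigma\vert_K)$ in the sense of Barreira \cite{b2} or Cao--Feng--Huang \cite{cao} is not. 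That compact-space pressure is the intended meaning of $P(\F\vert_K)$, and the periodic-point formula for it is only available on topologically mixing finite state shifts (see \cite[Theorem 2]{b2}, quoted before Proposition \ref{approx}); hence the inequality $P(\F\vert_K)\leq P(\F\vert_{Y})$ cannot be obtained by a term-by-term comparison of partition functions. The correct substitute is monotonicity of the compact-space pressure under inclusion of compact invariant sets --- immediate from its definition, or from the variational principle on compact spaces, since every invariant measure on $K$ is an invariant measure on $Y$ --- applied with $Y\supseteq K$ a topologically mixing finite state Markov shift built from the finitely many symbols met by $K$ together with the mixing upgrade used in the proof of Proposition \ref{approx}. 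With that step replaced, your argument does give the corollary.
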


\begin{coro}
Let $(\Sigma, \sigma)$ be a topologically mixing countable state Markov shift and $\F$ be an almost-additive Bowen sequence on $\Sigma$ with $\vert \vert L_{f_1}1 \vert \vert _{\infty}<\infty$. Then the pressure function $t \mapsto P(t\F)$ is convex.\end{coro}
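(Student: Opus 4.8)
The plan is to show that $t \mapsto P(t\F)$ is convex by writing the pressure, for each fixed $n$, as a logarithm of a sum of exponentials of linear functions of $t$, and then passing to the limit. First I would fix a symbol $a \in S$ and recall that, since $\F$ is an almost-additive Bowen sequence with $\|L_{f_1}1\|_\infty < \infty$, the quantity $Z_n(t\F, a)$ is finite for every $n$ and every $t \ge 0$ (this is exactly the content of Lemma \ref{upper}, applied to the sequence $t\F$, whose constant is $tC$ and whose associated bound is $M^t$), and that $P(t\F) = \lim_n \frac1n \log Z_n(t\F,a)$ exists. Note that for a periodic point $x$ with $\sigma^n x = x$ one has $(t\F)_n(x) = f_n(x)^t = e^{t\log f_n(x)}$, so
\begin{equation*}
Z_n(t\F,a) = \sum_{\sigma^n x = x} e^{\,t \log f_n(x)}\,\chi_{C_a}(x).
\end{equation*}

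The key step is the observation that for each fixed $n$ the map $t \mapsto \log Z_n(t\F,a)$ is convex on $[0,\infty)$. This is the standard fact that a (possibly infinite) sum $\log \sum_j e^{t c_j}$, with real coefficients $c_j = \log f_n(x_j)$ ranging over the countably many relevant periodic points, is convex in $t$ wherever it is finite; it follows from H\"older's inequality: for $\lambda \in [0,1]$ and $t = \lambda t_1 + (1-\lambda) t_2$,
\begin{equation*}
\sum_j e^{t c_j} = \sum_j \left(e^{t_1 c_j}\right)^{\lambda}\left(e^{t_2 c_j}\right)^{1-\lambda} \le \left(\sum_j e^{t_1 c_j}\right)^{\lambda}\left(\sum_j e^{t_2 c_j}\right)^{1-\lambda},
\end{equation*}
and taking logarithms gives $\log Z_n(t\F,a) \le \lambda \log Z_n(t_1\F,a) + (1-\lambda)\log Z_n(t_2\F,a)$. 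Dividing by $n$ preserves this inequality, so $t \mapsto \frac1n \log Z_n(t\F,a)$ is convex for each $n$.

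Finally I would let $n \to \infty$: a pointwise limit of convex functions is convex, so $t \mapsto P(t\F) = \lim_n \frac1n \log Z_n(t\F,a)$ is convex on $[0,\infty)$ (and the finiteness hypothesis $\|L_{f_1}1\|_\infty<\infty$ guarantees, via Lemma \ref{upper}, that all the quantities involved are finite, so there is no issue of the limit being $+\infty$). The only mild subtlety, and the place I expect to spend the most care, is justifying the interchange of sum and limit / the use of H\"older's inequality for an infinite sum of positive terms — but this is routine since all terms are nonnegative and the total sums are finite, so monotone convergence and the infinite-sum H\"older inequality apply directly. One should also note that the value $P(t\F)$ is independent of $a$ by Proposition \ref{indp}, so the construction does not depend on the choice of $a$.
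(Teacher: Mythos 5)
Your argument is correct in substance, but it takes a different route from the paper. The paper's proof is two lines: by Proposition \ref{approx}, $P(t\F)$ is the supremum of $P(t\F\vert_Y)$ over topologically mixing finite state subshifts $Y$, each of these functions of $t$ is convex by the compact-case theory (Barreira), and a supremum of convex functions is convex. You instead work directly with the definition: for fixed $n$ and $a$, the map $t\mapsto \log Z_n(t\F,a)=\log\sum e^{t\log f_n(x)}$ is convex by H\"older, and a pointwise limit of convex functions is convex. Your route is more self-contained (it does not import the compact-case convexity result nor the approximation proposition), it makes transparent where convexity comes from, and the same H\"older inequality works for every $t\in\R$, so there is no reason to restrict to $t\ge 0$ as you do. What the paper's route buys is brevity, given that Proposition \ref{approx} is already available.

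One assertion in your write-up is wrong, though it does not sink the argument: you claim that $\Vert L_{f_1}1\Vert_\infty<\infty$ forces $Z_n(t\F,a)<\infty$ and hence $P(t\F)<\infty$, ``exactly the content of Lemma \ref{upper} applied to $t\F$.'' Lemma \ref{upper} applied to $t\F$ bounds $Z_n(t\F,a)$ by a multiple of $\Vert L_{f_1^t}1\Vert_\infty^n$, i.e.\ it requires $\sup_x\sum_{\sigma z=x}f_1(z)^t<\infty$, and this does not follow from $\Vert L_{f_1}1\Vert_\infty<\infty$ when $0\le t<1$ (take the full shift of Section \ref{ex1} with $\sum_j\lambda_j<\infty$ but $\sum_j\lambda_j^t=\infty$; there the paper itself records that $P(t\F)=\infty$ for $t<t'$). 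So the pressure can indeed be $+\infty$ on part of the parameter range under the stated hypothesis. This is harmless for your proof: H\"older for nonnegative terms and the liminf passage to the limit are valid with values in $(-\infty,+\infty]$, and the limit is never $-\infty$ by the lemma preceding Proposition \ref{indp} applied to $t\F$, so you obtain convexity of $t\mapsto P(t\F)$ as an extended-real-valued function --- which is also all that the paper's own supremum argument yields. You should simply delete the finiteness claim rather than rely on it.
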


\begin{proof}
The pressure function  defined on a compact invariant set is convex. Therefore, the result follows because the supremum of convex functions is a convex function.
\end{proof}

Our definition of almost-additive pressure satisfies the variational principle.

\begin{teo}\label{main1}
Let $(\Sigma, \sigma)$ be a topologically mixing countable state Markov shift and $\F$ be an almost-additive Bowen sequence on $\Sigma$, with $\sup f_1 < \infty$.
%$\vert \vert L_{f_1}1 \vert \vert _{\infty}<\infty$. 
Then
\begin{align*}
P(\F)&=\sup \left\{ h(\mu) + \lim_{n\to \infty} \frac{1}{n} \int \log f_n \  d\mu : \mu \in \mathcal{M} \textrm{ and }  
\lim_{n \rightarrow \infty}\frac{1}{n} \int \log f_n \  d\mu \neq -\infty
\right\}\\
&=\sup \left\{ h(\mu) + \int \lim_{n\to \infty}\frac{1}{n}\log f_n \  d\mu : \mu \in \mathcal{M} \textrm{ and }  
\int \lim_{n \rightarrow \infty}\frac{1}{n}\log f_n \  d\mu \neq -\infty
\right\}.
\end{align*}
\end{teo}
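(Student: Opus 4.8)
The plan is to prove the two displayed equalities in turn. The second one (the interchange of $\lim$ and $\int$) is soft and rests on Kingman's theorem. By \eqref{A2} the sequence $\{\log f_n+C\}_{n\ge1}$ is a genuine subadditive cocycle, and $\sup f_1<\infty$ forces $(\log f_1)^{+}$, hence $(\log f_1+C)^{+}$, to be bounded and therefore in $L^{1}(\mu)$ for every $\mu\in\M$. So Kingman's subadditive ergodic theorem \cite{ki} produces a $\sigma$-invariant function $\phi:=\lim_{n\to\infty}\tfrac1n\log f_n$, defined $\mu$-a.e. for each $\mu$, with $\int\phi\,d\mu=\lim_{n\to\infty}\tfrac1n\int\log f_n\,d\mu$. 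Iterating \eqref{A2} gives $\log f_n(x)\le (n-1)C+\sum_{i=0}^{n-1}\log f_1(\sigma^{i}x)$, so $\phi$ is bounded above by $C$ plus the (a.e. finite) Birkhoff limit of $\log f_1$, whose positive part lies in $L^{1}(\mu)$; hence $\int\phi\,d\mu$ is a well-defined element of $[-\infty,\infty)$ and the two side conditions in the theorem are equivalent. It therefore suffices to prove
\[
P(\F)=\sup\Bigl\{h(\mu)+\int\phi\,d\mu:\ \mu\in\M,\ \int\phi\,d\mu\neq-\infty\Bigr\}.
\]

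\emph{Upper bound.} Let $Y\subset\Sigma$ be a topologically mixing finite-state Markov shift. Then $\F|_Y$ is an almost-additive sequence of bounded variation on the compact shift $Y$ (the constant $C$ and the Bowen bound $M$ are inherited, so the tempered condition holds trivially), and its pressure in the sense of this paper coincides, by the periodic-point formula of \cite{b2} recalled just before Proposition \ref{approx}, with the almost-additive pressure on $Y$ studied in \cite{b2,cao}. By the variational principle on compact finite-alphabet subshifts \cite{b2,cao},
\[
P(\F|_Y)=\sup\Bigl\{h(\nu)+\int\phi\,d\nu:\ \nu\in\M(Y)\Bigr\},
\]
where $\M(Y)$ denotes the invariant probability measures carried by $Y$; here one uses that the Kolmogorov--Sinai entropy of such a $\nu$ does not depend on the ambient shift and that the a.e. limit $\phi$ is the same whether computed in $Y$ or in $\Sigma$. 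Since $\M(Y)\subset\M$, the right-hand side is at most $\sup\{h(\mu)+\int\phi\,d\mu:\mu\in\M\}$; taking the supremum over all such $Y$ and applying Proposition \ref{approx} yields ``$\le$''.

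\emph{Lower bound.} If $P(\F)=\infty$ there is nothing to prove, so assume $P(\F)<\infty$ (note $P(\F)>-\infty$ by Lemma \ref{3}). Fix $\mu\in\M$ with $\int\phi\,d\mu>-\infty$. Since $h(\cdot)$ and $\mu\mapsto\int\phi\,d\mu$ are affine, the ergodic decomposition lets us assume $\mu$ ergodic (components with $\int\phi\,d\mu_\omega=-\infty$ contribute trivially, as $-\infty\le P(\F)$), and we may assume $h(\mu)<\infty$, the case $h(\mu)=\infty$ being handled by the same argument with an arbitrarily large constant replacing $h(\mu)-\epsilon$ below. Iterating \eqref{A1} and \eqref{A2} gives $|\phi-\lim_n\tfrac1n\sum_{i<n}\log f_1\circ\sigma^{i}|\le C$ $\mu$-a.e., so $\int\phi\,d\mu>-\infty$ forces $\log f_1\in L^{1}(\mu)$. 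Given $\epsilon>0$, choose a finite set $F\subset S$ such that the finite partition $\alpha_F=\{C_i:i\in F\}\cup\{\Sigma\setminus\bigcup_{i\in F}C_i\}$ has $h(\mu,\alpha_F)\ge h(\mu)-\epsilon$ (finite sub-partitions of the countable generating partition exhaust the entropy of a countable Markov shift), and such that $\mu(\Sigma\setminus\bigcup_{i\in F}C_i)<\epsilon$ and $\int_{\Sigma\setminus\bigcup_{i\in F}C_i}|\log f_1|\,d\mu<\epsilon$ (absolute continuity of $\nu\mapsto\int|\log f_1|\,d\nu$ with respect to $\mu$). Enlarging $F$ by the finitely many connecting symbols as in the proof of Proposition \ref{approx}, we obtain a topologically mixing finite-state Markov shift $Y\subset\Sigma$. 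What remains is to construct $\nu\in\M(Y)$ with $h(\nu)+\int\phi\,d\nu\ge h(\mu)+\int\phi\,d\mu-c\,\epsilon$ for a constant $c$ independent of $\epsilon$; granting this, Proposition \ref{approx} gives $P(\F)\ge P(\F|_Y)$, the compact variational principle on $Y$ gives $P(\F|_Y)\ge h(\nu)+\int\phi\,d\nu$, and letting $\epsilon\to0$ finishes the proof.

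\emph{The main obstacle} is exactly this construction of $\nu$, which is the almost-additive analogue of Sarig's lower bound for the Gurevich pressure \cite{s1}. The idea is to take long $\mu$-generic orbit segments, discard the asymptotically negligible (by Birkhoff applied to the indicator of $\Sigma\setminus\bigcup_{i\in F}C_i$ and the Shannon--McMillan--Breiman theorem relative to $\alpha_F$) set of times at which the excluded symbols occur, close up the resulting admissible $F$-words into periodic orbits inside $Y$ using topological mixing — incurring the bounded multiplicative errors already met in the proof of Proposition \ref{indp} — and let the associated periodic measures converge weak-$*$ to $\nu$. Verifying the required inequality splits into a lower bound on $h(\nu)$, from the Shannon--McMillan--Breiman count of the number of such $n$-words (there are $\gtrsim e^{n(h(\mu,\alpha_F)-\epsilon)}$ of them) together with upper semicontinuity of entropy along the approximating measures, and a lower bound on $\int\phi\,d\nu$, from Kingman's theorem along the generic segments and the integrability estimate controlling the excluded part, using almost-additivity to pass between $\phi$ and the finite-time quantities $\tfrac1n\log f_n$. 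Throughout, the constant $C$ enters only through $O(C/n)$ terms that vanish in the limit; the delicate point is the combinatorial bookkeeping that keeps the assignment of words to periodic points boundedly-to-one, and this is where essentially all of the work lies.
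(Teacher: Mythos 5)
Your reduction of the second equality to Kingman's theorem, and your proof of the inequality $P(\F)\leq\sup\{\cdots\}$ via Proposition \ref{approx} together with Barreira's variational principle on the finite-state subsystems, coincide with what the paper does. The divergence is in the hard inequality $h(\mu)+\lim_{n}\frac1n\int\log f_n\,d\mu\leq P(\F)$. The paper never approximates $\mu$ by measures living on compact subsystems: it adapts Sarig's argument directly, coarsening the countable partition to $\alpha_m=\{C_1,\dots,C_{m-1},C_{\geq m}\}$, applying the standard entropy inequality (Walters, Lemma 9.9) to $\frac1n H_\mu(\beta_0^n)+\frac1n\int\log f_n\,d\mu$, and proving the key Lemma \ref{keylema} which bounds $\limsup_n P_n(a,b)$ by $P(\F)$ when $a,b\neq C_{\geq m}$ and by a uniform constant $C'$ otherwise; letting $m\to\infty$ kills the error term $2C'\mu(C_{\geq m})$. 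You instead propose an ``inner regularity of free energy'' statement: construct $\nu\in\M(Y)$ on a finite-state $Y$ with $h(\nu)+\int\phi\,d\nu\geq h(\mu)+\int\phi\,d\mu-c\epsilon$ by closing up $\mu$-generic segments into periodic orbits. That statement is true, but it is essentially as deep as the variational principle itself, and your sketch of it has genuine gaps.

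Concretely, two steps fail as written. First, the entropy lower bound: the approximating measures in your construction are averages of uniform measures on finitely many periodic orbits, and every such measure has entropy zero (entropy is affine and each ergodic component is a periodic-orbit measure), so ``upper semicontinuity of entropy along the approximating measures'' yields nothing; to get $h(\nu)\gtrsim h(\mu,\alpha_F)$ from the Shannon--McMillan--Breiman word count you need a Misiurewicz/Katok-type argument bounding $H_{\nu_k}(\beta_0^n)$ for the non-invariant averages and passing to the limit with the subadditivity trick — none of which is supplied, and this is the heart of the matter. Second, in a Markov shift (as opposed to a full shift) deleting the positions carrying symbols outside $F$ from a generic orbit segment generally destroys admissibility, so ``the resulting admissible $F$-words'' need not exist; you must splice in connecting blocks from topological mixing, and then control both the distortion of $f_n$ across the splices (via the Bowen constant and almost-additivity — this part is fine) and, more delicately, the multiplicity of the map from original segments to the periodic points produced, since collisions could destroy the exponential count. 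You acknowledge this bookkeeping is ``where essentially all of the work lies,'' which is an accurate self-assessment: as it stands the proposal is a plausible alternative strategy, not a proof, whereas the paper's route through Lemma \ref{keylema} avoids the orbit-closing construction altogether.
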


\begin{proof}
If the pressure is infinite, $P(\F)= \infty$, the variational principle  holds. Indeed,  in virtue of Proposition \ref{approx}, 
there exists a sequence of topologically mixing finite state Markov shifts $\{Y_n\}_{n=1}^{\infty}$, with $Y_n \subset \Sigma$, for every $n \in \N$, such that\begin{eqnarray*}
\infty= P(\F) =  \lim_{n \to \infty}  \sup \left\{ h(\mu) + \lim_{n\to \infty} \frac{1}{n} \int \log f_n \  d\mu : \mu \in \mathcal{M}_{Y_{n}} \right\} &\\
\leq \sup \left\{ h(\mu) + \lim_{n\to \infty} \frac{1}{n} \int \log f_n \  d\mu : \mu \in \mathcal{M} \textrm{ and }  \lim_{n\rightarrow \infty}\frac{1}{n} \int \log f_n \  d\mu \neq -\infty
\right\}.
\end{eqnarray*}
In the rest of the proof we will assume $P(\F)<\infty$.
Since $\F$ is almost-additive, we assume that it satisfies equations (\ref{A1}) and (\ref{A2}). We first show the second equality in Theorem \ref{main1} by proving for any $\mu\in \M$,
\begin{equation*}
\lim_{n\to \infty} \frac{1}{n} \int \log f_n \  d\mu= \int \lim_{n \rightarrow \infty}\frac{1}{n}\log f_n \  d\mu.
\end{equation*}
To see this, set $g_{n}(x)=f_{n}(x)e^{C}$. Then $\{\log g_n\}_{n=1}^{\infty}$ satisfies the subadditivity condition (equation (\ref{sc}) in Definition \ref{subadditive}). Since
%$\vert \vert L_{f_1}1 \vert \vert _{\infty}<\infty$ implies
 $\sup f_{1}<\infty$, we have 
$\log g_n: \Sigma\rightarrow \R\cup\{-\infty\}$ for all $n\in \N$ and $(\log g_{1})^{+}\in L^{1}(\mu)$. Therefore, Kingman's subadditive ergodic theorem implies the result.
 
Now we will show the first equality. For a compact subset $Y\subset \Sigma$, denote by $\M_Y$ the set of $\sigma$-invariant Borel probability measures on $Y$.
Note that Barreira \cite{b2}  proved the variational principle for the case when $\Sigma$ is a finite state Markov shift. Therefore, in virtue of Proposition \ref{approx}, 
there exists a sequence of topologically mixing finite state Markov shifts $\{Y_n\}_{n=1}^{\infty}$, with $Y_n \subset \Sigma$, for every $n \in \N$, such that
\begin{eqnarray*}
P(\F) = \lim_{n \to \infty} P(\F\vert _{Y_n}) = \lim_{n \to \infty}  \sup \left\{ h(\mu) + \lim_{n\to \infty} \frac{1}{n} \int \log f_n \  d\mu : \mu \in \mathcal{M}_{Y_{n}} \right\} &\\
\leq \sup \left\{ h(\mu) + \lim_{n\to \infty} \frac{1}{n} \int \log f_n \  d\mu : \mu \in \mathcal{M} \textrm{ and }  \lim_{n\rightarrow \infty}\frac{1}{n} \int \log f_n \  d\mu \neq -\infty
\right\}.
\end{eqnarray*}

In order to prove the other inequality, we adapt the proof of  \cite[Theorem 3]{s1}. We need a version of  \cite[Lemma 4]{s1} for sequences of functions. In the following proof, we identify a countable alphabet $S$
with $\N$. For $m\in\N$, set $C_{\geq m}=\{x\in \Sigma: x_{0}\geq m\}$ and  let $\alpha_m=\{C_1, \dots, C_{m-1}, C_{\geq m}\}$ (see Section \ref{sec:def} for the notation of cylinder sets).  
Let $\mu \in \mathcal{M}$. Then
\begin{equation*}
\lim_{m\rightarrow \infty} \left(h_{\mu}(\sigma, \alpha_m)+\lim_{n\rightarrow \infty}\frac{1}{n}\int \log f_n \ d\mu \right)=h(\mu)+\lim_{n\rightarrow \infty}\frac{1}{n}\int \log f_n \ d\mu.
\end{equation*}
 Fix $m \in \N$ and set $\beta=\alpha_m$. Let $\beta_{0}^{n}=\bigvee_{i=0}^{n}\sigma^{-i}(\beta)$. For each $a_i\in \beta$, let $E_{a_0\dots a_n}:=\cap_{k=0}^{n}\sigma^{-k}(a_k)$.  
For $E\in \beta _{0}^n$ define $f_n[E]=\sup\{f_n(x):x\in E\}$. We have that
\begin{align*}
&\frac{1}{n} \left(H_{\mu}(\beta_{0}^n)+\int \log f_n d\mu \right)\\ 
&\leq  \frac{1}{n}\sum_{a,b\in \beta}\mu(a\cap \sigma^{-n}b) \left(\sum_{\substack{E\subseteq a\cap \sigma^{-n}b, \\ E\in \beta_{0}^n}}\mu(E\vert a\cap \sigma^{-n}b)\log 
\frac{f_n[E]}{\mu(E)} \right)\\
&\leq  \frac{1}{n}\left(\sum_{a,b\in \beta}\mu(a\cap \sigma^{-n}b)\log (\sum_{\substack{E\in a\cap \sigma^{-n}b\\ E\in \beta_{0}^n}}f_n[E])\right)+\frac{1}{n}H_{\mu} 
\left(\beta \vee \sigma^{-n}\beta \right),
\end{align*}
where the last inequality follows from  \cite[Lemma 9.9]{w1}. For $a, b\in \beta$, 
set
\begin{equation*} P_n(a, b)=\frac{1}{n}\log  \left(\sum_{E \subseteq a\cap \sigma^{-n}b, E\in \beta _{0}^n}f_{n}[E]\right).
\end{equation*}
Thus we obtain
\begin{equation}\label{entropy}
\frac{1}{n}H_{\mu}(\beta_{0}^n)+\frac{1}{n}\int \log f_n d\mu \leq \left(\sum_{a,b\in \beta}\mu(a\cap \sigma^{-n}b)P_n(a, b)\right)+\frac{2H_{\mu}(\beta)}{n}.
\end{equation}
In what follows, we will obtain an upper bound for $\limsup_{n\rightarrow \infty}P_{n}(a, b)$. This bound will be different depending on whether both $a$ and $b$ belong to the set 
$\{C_1, \dots, C_{m-1}\}$ or not.
Let $M>0$ be such that $\sup\{f_n(y)/f_n(x):x_{i}=y_{i},  0\leq i\leq n-1\}\leq M$ for all $n\in \N$.

\begin{lema}\label{keylema}
Under the assumptions of Theorem \ref{main1}, we have
\begin{enumerate}
\item If $a, b\neq C_{\geq m}$, then $\limsup_{n\rightarrow \infty}P_{n}(a,b)\leq P(\F)$.\label{partition1}
\item If $a=C_{\geq m}$ or $b=C_{\geq m}$, then there exists $C' \in \R$ such that 
$$\limsup_{n\rightarrow \infty}P_n(a, b)\leq C'.$$\label{partition2}
\end{enumerate}
\end{lema}

\begin{proof}
Our arguments are similar to those in \cite{s1} and make use of the ideas in Proposition \ref{indp}. We first show (\ref{partition1}). Let $\alpha=\{C_{i}:i\in \N\}$ and 
$\alpha_{0}^{n}=\bigvee_{i=0}^{n}\sigma^{-i}(\alpha)$.
Since $\alpha_{0}^n$ is finer than $\beta_{0}^{n}$ (see  \cite[Footnote 3 in Chapter 4]{s4}), we have
$$P_{n}(a,b)\leq \frac{1}{n}\log \left(\sum _{E\subseteq a\cap \sigma^{-n}b, E\in \alpha_{0}^n}f_n[E]\right).$$
We claim that there exist constants $A, k>0$ such that
\begin{equation}\label{keycase1}
\sum _{E\subseteq a\cap \sigma^{-n}b, E\in \alpha_{0}^n}f_n[E]\leq A\sum _{\sigma^{n+2k}x=x, x\in a }f_{n+2k}(x).
 \end{equation}
 Let $E_{ad_1\dots d_{n-1}b}\subset a\cap \sigma^{-n}b$. For  convenience, let $a=C_{N_1}$ and $b=C_{N_2}$, where $1\leq N_1, N_2\leq m-1$. 
Take a point $x=(x_{0,}\dots, x_{n},\dots) \in E_{ad_1\dots d_{n-1}b}$ such that
$f_{n}[E_{ad_1\dots d_{n-1}b}]\leq 2f_{n}(x)$. Then $x\in C_{N_1 {\bar{d}}_1 {\bar{d}}_2 \cdots {\bar{d}}_{n-1} N_2}\subset E_{ad_1\dots d_{n-1}b}$, for some ${\bar{d}}_i\in \N, 
C_{\bar{d}_i} \subseteq C_{d_i}, 1\leq i\leq n-1$.  Using the same arguments used to prove Proposition \ref{indp},
we construct $\tilde{x}\in C_{N_1}$ such that $\sigma^{n+2k}\tilde{x}=\tilde{x},$ in  the  following way. Since $N_1 \bar{d}_1 \dots \bar{d}_{n-1}N_2$ is an admissible word of 
length $(n+1)$ in $\Sigma$, using the same notation as in the proof of  Proposition \ref{indp}, set $k=\max\{N_{N_1N_1}, N_{N_2N_1}\}$.
Define $y_{1}\dots y_{k-1}$ and $z_{1}\dots z_{k-1}$ so that $N_1y_1\dots y_{k-1}N_1$ and $N_2z_1\dots z_{k-1}N_1$ are allowable words in $\Sigma$. Set $A=N_1y_{1}\dots y_{k-1}N_1\bar{d}_{1}\dots \bar{d}_{n-1}N_2z_{1}\dots z_{k-1}$ and define $\tilde x=A^{\infty}$. Clearly,
\begin{equation*}
f_n[E_{ad_1\dots d_{n-1}b}]\leq \frac{2f_{n}(x)}{f_{n+2k}(\tilde x)}f_{n+2k}(\tilde x).
\end{equation*}
Note that $x_i=({\sigma}^{k}\tilde x)_{i}$ for $0\leq i\leq n-1$. Therefore, we obtain
\begin{equation*}
\frac{f_{n}(x)}{f_{n}(\sigma^{k}\tilde x)}\leq M.
\end{equation*}
Approximating ${f_{n}(x)}/{f_{n+2k}(\sigma^{k}\tilde x)}$ by an argument similar to that in the proof of Proposition \ref{indp}, we obtain (\ref{keycase1}). Therefore, we have
\begin{align*}
\limsup_{n\rightarrow \infty}P_{n}(a,b)&\leq \limsup_{n\rightarrow \infty}\frac{1}{n}\log \left(\sum _{\sigma^{n+2k}x=x, x\in C_{N_1}}f_{n+2k}(x) \right)\\
& = \lim_{n\rightarrow\infty}\frac{1}{n}\log Z(\F, N_1)=P(\F).
\end{align*}
Next we show Lemma \ref{keylema} (\ref{partition2}). We consider the case when $a=b=C_{\geq m}$. Other cases can be shown similarly.  Let $n\geq 3$ be fixed. Write $\{E \in \beta_{0}^{n}: E\subset a\cap \sigma^{-n}b \}=\cup_{i, j, k}A_{i, j, k}$, where 
\[ A_{i,j,k}=\left\{E_{a^{i}d_{1}\dots d_{j}b^{k}}\in \beta_{0}^n: a=b=C_{\geq m}, d_1, d_j\neq C_{\geq m}\right\},\]
 %i+j+k+1%
 $i+j+k=n+1$. We first consider the case when $j\geq 2$. For $j= 0, 1$, we make a similar argument. Define for each $i, j, k$,
\begin{equation*}
S_{i,j,k}=\sum_{E \in A_{i,j,k}}f_n[E].
\end{equation*}
We first find an upper bound for $S_{i,j,k}$. Fix $i,j$ and $k$. Let $E_{a^{i}d_{1}\dots d_{j}b^{k}}\in A_{i, j, k}$. For a point $x\in E_{a^{i}d_{1}\dots d_{j}b^{k}}$, we have 
by equation (\ref{A2}) 
\begin{equation*}
f_n(x)\leq  f_{1}(x)f_{n-1}(\sigma x)e^C \leq f_{1}(x)f_1(\sigma x)f_{n-2}(\sigma^{2}x)e^{2C}\leq ....\leq
\vert\vert f_{1}\vert\vert^{n-j}_{\infty}e^{C(n-j)}f_{j}(\sigma^{i}x).
\end{equation*}
Now let $d_{1}=C_{N_1}, d_{j}=C_{N_2}, 1\leq N_1, N_2\leq m-1$ and  $E_{a^{i}d_{1}\dots d_{j}b^{k}}\in A_{i, j,k}$. Take a point $x\in E_{a^{i}d_{1}\dots d_{j}b^{k}}$ such that
$f_n[E_{a^{i}d_1\dots d_jb^{k}}]\leq 2f_n(x)$. Call it $\bar{x}_{a^{i}d_{1}\dots d_{j}b^{k}}$. Then
$\bar{x}_{a^{i}d_{1}\dots d_{j}b^{k}}\in C_{x_{1}\dots x_{i}N_1y_{2}\dots y_{j-1}N_{2}z_{1}\dots z_{k}}$, where $x_{l}\geq m$ for $1\leq l \leq i$, $z_{l}\geq m$ for $1\leq l \leq k$, $y_{l}\geq 1$ for $2\leq l\leq j-1$.
Therefore,
\begin{align*}
f_n[E_{a^{i}d_1\dots d_jb^{k}}]&\leq 2f_{n}(\bar{x}_{a^{i}d_{1}\dots d_{j}b^{k}}) 
\leq 2\vert\vert f_1\vert\vert ^{n-j}_{\infty}e^{C(n-j)}f_{j}(\sigma^{i}\bar{x}_{a^{i}d_{1}\dots d_{j}b^{k}})  \\
&\leq 2\vert \vert f_1 \vert \vert ^{n-j}_{\infty}e^{C(n-j)}\sup\{f_{j}(x): x\in C_{N_1y_{2}\dots y_{j-1}N_{2}}\}\\
&\leq 2\vert \vert f_1 \vert \vert ^{n-j}_{\infty}e^{C(n-j)}M f_{j}(x),
\end{align*}
for any $x\in C_{N_{1}y_{2}\dots y_{j-1}N_2}$.
Consider a point $N_{2}z=(N_2, z_{0},\dots, z_{n},\dots)\in \Sigma$ and let it be fixed. Denote by $B_{j}(\Sigma)$ the set of admissible words of length $j$ in $\Sigma$.
Then for $d_{1}=C_{N_1}, d_{j}=C_{N_2}$,
\begin{equation*}
\sum_{E_{a^{i}d_{1}\dots d_{j}b^{k}}\in A_{i, j, k}}f_{n}[E_{a^{i}d_1\dots d_jb^{k}}]\leq  2\vert \vert f_1 \vert \vert ^{n-j}_{\infty}e^{C(n-j)}M
\sum_{\substack{x=N_{1}\dots N_{2}z\in \Sigma \\ N_{1}\dots N_{2}\in B_{j}(\Sigma)}}f_{j}(x).
\end{equation*}
By an argument similar to that used to prove Lemma \ref{upper}, we obtain
\begin{equation*}
\sum_{\substack{x=N_{1}\dots N_{2}z\in \Sigma \\ N_{1}\dots N_{2}\in B_{j}(\Sigma)}}f_{j}(x) \leq e^{C(j-1)}\vert\vert f_1\vert\vert_{\infty}L_{f_1}^{j-1} \chi_{C_{N_1}}(N_2z)
\leq e^{C(j-1)}\vert\vert f_1\vert\vert_{\infty}\vert \vert L_{f_1}1\vert\vert^{j-1}_{\infty}.
\end{equation*}
Since $N_{1}, N_{2}\in \{1, \dots, m-1\}$, we have
\begin{eqnarray*}
S_{i, j, k}&=\sum_{E\in A_{i, j, k}} f_n[E]\leq 2\vert\vert f_1\vert\vert^{n-j}_{\infty}e^{C(n-j)}(m-1)^2Me^{C(j-1)}\vert\vert f_1\vert\vert_{\infty}\vert\vert L_{f_1}1\vert\vert_{\infty}^{j-1}.
\end{eqnarray*}
The above inequality also holds for $j=1$. 
For each fixed $n$, consider $j\geq 1$ such that the right hand side of the above inequality takes the maximal value at $j$. Call it $j_n$.
For $j=0 \text{ (and so } i+k=n+1)$, it is easy to see that $S_{i,0,k}\leq 2\vert \vert f_1 \vert \vert _{\infty}^n e^{(n-1)C}$. Let 
\begin{equation*}B_n=\max \{2\vert \vert f_1 \vert \vert _{\infty}^n e^{(n-1)C}, 
2\vert\vert f_1\vert\vert^{n-j_n}_{\infty}e^{C(n-j_n)}(m-1)^2Me^{C(j_n-1)}\vert\vert f_1\vert\vert_{\infty}\vert\vert L_{f_1}1\vert\vert_{\infty}^{j_n-1}\}.
\end{equation*}
Since $1\leq i,k\leq n+1, 0\leq j\leq n-1$
\begin{equation*}
P_n(a, b)=\frac{1}{n} \log \left(\sum_{i,j,k}S_{i,j,k} \right) \leq \frac{1}{n}\log (n+1)^3 B_n.
\end{equation*}
Therefore,
\begin{align*}
&\limsup_{n\rightarrow \infty}P_n(a, b) \\
&\leq \max \left\{\log \vert\vert f_1\vert\vert_{\infty}+\log \vert\vert L_{f_1}1\vert \vert_{\infty} +2C, 2C, \log \vert\vert f_1\vert\vert_{\infty}+2C, \log \vert\vert L_{f_1}1\vert \vert_{\infty}+2C\right\}.
\end{align*}
\end{proof}

For completeness, we will include the final part of the proof of \cite[Theorem 4.4]{s4} (see also \cite[Theorem 3]{s1}). By Lemma \ref{keylema} 
and  (\ref{entropy}),
\begin{align*}
&h(\mu)+\lim_{n\rightarrow \infty}\frac{1}{n}\int \log f_{n}d\mu \leq \limsup_{n \rightarrow \infty} \left(\sum_{a,b\in \beta}\mu(a\cap \sigma^{-n}b)P_n(a, b)\right)\\
&\leq \limsup_{n\rightarrow \infty}\left(P(\F)\sum_{a,b\neq C_{\geq m}}\mu(a\cap \sigma^{-n}b)+C'\sum_{a=C_{\geq m} \text{ or } b=C_{\geq m}}\mu(a\cap \sigma^{-n}b)\right)
\\ & \leq P(\F)+C'(\mu(C_{\geq m})+\mu(\sigma^{-n}(C_{\geq m}))) \leq P(\F)+2C'\mu(C_{\geq m}).
\end{align*}
Letting $m\rightarrow \infty$, we obtain the result.
%implies that
%for $\epsilon>0$, $a, b\in \beta$, there exists $N_{ab}$ such that for all $n>N_{ab}$ we have that  if $a, b\in \{C_{1}, \dots, C_{m-1}\}$ then
%$$P_{n}(a, b)\leq P(\F)+\epsilon,$$
%and  $P_{n}(a, b)\leq C'+\epsilon$ otherwise. Let $\bar N=\max\{N_{ab}:a, b\in \beta\}$. 
%Then for $n>\bar N$, we have
%\begin{align*}
%&\sum_{a,b\in \beta}\mu(a\cap \sigma^{-n}b)P_{n}(a,b) \\
%& \leq \mu(C^{c}_{\geq m}\cap \sigma^{-n}(C^{c}_{\geq m}))(P(\F)+\epsilon) +\mu(C_{\geq m}\cup \sigma^{-n}(C_{\geq m}))(C'+\epsilon) \\
%& \leq (1-\mu(C_{\geq m}))P(\F)+2C'\mu(C_{\geq m})+
%\epsilon.
%\end{align*}
%Therefore,
%\begin{equation*}
%h_{\mu}(\sigma, \alpha_m)+\lim_{n\rightarrow \infty}\frac{1}{n}\int \log f_{n}d\mu\leq (1-\mu(C_{\geq m}))P(\F)+2C'\mu(C_{\geq m})+\epsilon.
%\end{equation*}
%Letting $m\rightarrow \infty$, we obtain
%\begin{equation*}
%h(\mu)+\lim_{n\rightarrow \infty}\frac{1}{n}\int \log f_{n}d\mu\leq P(\F)+\epsilon.
%\end{equation*}
\end{proof}

The set of $\sigma$-invariant Borel probability measures, $\M$,  is a very large and complicated convex, non-compact set. Indeed, it strictly contains a countable family of Poulsen simplexes, that is, infinite dimensional compact and convex sets with the property that the extreme points are dense in the set. It is therefore a major problem in the ergodic theory of countable Markov shifts that  of choosing relevant invariant measures. The variational principle provides a criteria for making that choice.

\begin{defi}
Let  $(\Sigma, \sigma)$ be a topologically mixing countable state Markov shift  and  $\F=\{\log f_n\}_{n=1}^{\infty}$ be an almost-additive  sequence on $\Sigma$. A measure $\mu \in \M$ is said to be an \emph{equilibrium measure} for $\F$ if
\begin{equation*}
P(\F)= h(\mu) + \lim_{n \to \infty} \frac{1}{n} \int \log f_n \ d \mu.
\end{equation*}
\end{defi}

\section{Gibbs measures} \label{sg}
In this section, we prove the existence of Gibbs measures for an almost-additive sequence of continuous functions under certain assumptions. In order to do so, we require an additional assumption on the combinatorial structure of the Markov shift. This is a necessary assumption  
in the classical thermodynamical formalism for countable Markov shifts (see \cite{mu,s3}). Let us start with some basic definitions.

\begin{defi} \label{def-gibbs}
Let  $(\Sigma, \sigma)$ be a topologically mixing countable state Markov shift  and  $\F=\{\log f_n\}_{n=1}^{\infty}$ be an almost-additive  sequence on $\Sigma$. A measure $\mu \in \M$ is said to 
be \emph{Gibbs} for $\F$ if there exist constants $C>0$ and $P \in \R$ such that for every $n \in \N$ and every $x \in C_{i_0 \dots i_{n-1}}$ we have
\begin{equation*}
\frac{1}{C} \leq	\frac{\mu(C_{i_0 \dots i_{n-1}})}{\exp(-nP)f_n(x)}	\leq C.
\end{equation*}
 \end{defi}

There is a special class of Markov shifts having a combinatorial structure  similar to that of the full-shift, that will be important for us.
\begin{defi} \label{BIP}
A countable Markov shift  $(\Sigma, \sigma)$ is said to satisfy  the \emph{big images and preimages property (BIP property)} if 
there exists $\{ b_{1} , b_{2}, \dots, b_{n} \}$ in the alphabet $S$ such that
\begin{equation*}
\forall a \in S \textrm{ } \exists i,j \textrm{ such that } t_{b_{i}a}t_{ab_{j}}=1.
\end{equation*}
\end{defi}

It was shown by Mauldin and Urba\'nski \cite{mu} and also by Sarig  \cite{s3} that there is a combinatorial obstruction to the existence of Gibbs measures corresponding to a 
continuous function of summable variations. Indeed, if $(\Sigma, \sigma)$ is a topologically mixing countable Markov shift that does not satisfy the BIP property, then no continuous function 
can have Gibbs measures. It should also be noticed that in the compact setting of  Markov shifts over a finite alphabet,  Gibbs measures are always equilibrium measures. This is 
no longer true in the non-compact setting of countable Markov shifts. Indeed, a Gibbs measure $\mu$ for a continuous function $\phi$ could satisfy 
$h(\mu)= \infty$ and $\int \phi \ d \mu= - \infty$.  In such a situation, the measure $\mu$ is not an equilibrium measure for $\phi$ (see \cite{s3} for comments and examples). 
Of course, this type of phenomena can also occur in our context. 

Note that if  Let $\F=\{\log f_n\}_{n=1}^{\infty}$ is an almost-additive Bowen 
sequence on $\Sigma$ with  $\sum_{a\in S}\sup f_1\vert _{C_a}<\infty$,  then  implies  $\vert \vert L_{f_1}1 \vert \vert _{\infty}<\infty$. In particular the pressure is finite, $P(\F)<\infty$, and the variational principle (see Theorem \ref{main1}) holds.  The main result of this section is the following

\begin{teo}\label{main2}
Let $(\Sigma, \sigma)$ be a topologically mixing countable state Markov shift with the BIP property. Let $\F=\{\log f_n\}_{n=1}^{\infty}$ be an almost-additive Bowen 
sequence on $\Sigma$ with  $\sum_{a\in S}\sup f_1\vert _{C_a}<\infty$.  Then  there is a Gibbs measure $\mu$ for $\F$ and it is mixing. Moreover, If $h(\mu)<\infty$, then it is the unique equilibrium measure for $\F$. 
\end{teo}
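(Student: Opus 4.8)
The plan is to adapt the classical construction of Gibbs measures for countable Markov shifts with the BIP property (as in Sarig \cite{s3} and Mauldin--Urba\'nski \cite{mu}) to the almost-additive setting, using the transfer operator $L_{f_1}$ of equation \eqref{transfer} as the main tool. First I would observe that the hypothesis $\sum_{a\in S}\sup f_1\vert_{C_a}<\infty$ gives $\Vert L_{f_1}1\Vert_{\infty}<\infty$, so $P(\F)<\infty$ and, by the remark preceding Theorem \ref{main2}, the variational principle of Theorem \ref{main1} applies. I would then normalise: replacing $f_n$ by $f_n e^{-nP(\F)}$ (which preserves almost-additivity and the Bowen property and shifts the pressure to $0$), it suffices to build a measure satisfying the Gibbs bounds with $P=0$. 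The key is to show that the normalised operator has a suitable eigenfunction/eigenmeasure pair.

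The main steps, in order, are: (1) show that $L_{f_1}$ acts boundedly on the space of bounded functions and, using the BIP property together with the Bowen constant $M$, that it satisfies a Ruelle--Perron--Frobenius type estimate; more precisely, following Sarig's approach, one shows there is a conformal-type measure $\nu$ on $\Sigma$ and a constant $\lambda>0$ with $L_{f_1}^*\nu = \lambda \nu$, obtained as a weak-* limit of normalised iterates $L_{f_1}^{*n}\delta_x/\Vert L_{f_1}^{*n}\delta_x\Vert$, and that $\lambda = e^{P(\F)}$ by comparing with the partition function $Z_n(\F,a)$ via the identities relating $L_{f_1}^n$ and $f_n$ established in the proofs of Lemmas \ref{upper} and \ref{3}. (2) Construct a strictly positive bounded eigenfunction $h$ with $L_{f_1}h = \lambda h$, bounded away from $0$ and $\infty$; here the BIP property is essential, since it forces the "big images and preimages" uniformity that makes the cone of functions with controlled oscillation invariant — one uses a Hilbert-metric/Birkhoff contraction argument on this cone, exactly as in the additive case, with the almost-additivity constant $C$ and Bowen constant $M$ absorbed into the cone parameters. (3) Set $\mu = h\,\nu$ (suitably normalised to a probability measure) and verify the Gibbs inequalities: for $x\in C_{i_0\dots i_{n-1}}$, $\mu(C_{i_0\dots i_{n-1}})$ is comparable to $\lambda^{-n}f_n(x)$ because $\nu$ is conformal for $L_{f_1}$, the telescoping estimates \eqref{A1}--\eqref{A2} relate $f_n$ to the product $f_1(x)f_1(\sigma x)\cdots f_1(\sigma^{n-1}x)$ up to the factor $e^{\pm(n-1)C}$... wait, that factor is exponential in $n$, so one cannot afford to lose it; instead one must work with the genuine sequence $f_n$ throughout, using that $L_{f_1}^{*n}$-conformality combined with the Bowen property of $\F$ (not merely of $f_1$) yields $\nu(C_{i_0\dots i_{n-1}}) \asymp \lambda^{-n} f_n(x)$ directly, with constants depending only on $M$. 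Then $\sigma$-invariance of $\mu$ follows from $L_{f_1}^*\nu=\lambda\nu$ and $L_{f_1}h=\lambda h$ in the standard way, and mixing follows from the spectral gap of the normalised operator $\widehat{L}g = \lambda^{-1}h^{-1}L_{f_1}(hg)$ on the cone.

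For the final uniqueness-as-equilibrium-state assertion: assuming $h(\mu)<\infty$, I would first check that $\mu$ is itself an equilibrium measure, computing $h(\mu) + \lim_n \frac1n\int\log f_n\,d\mu = P(\F)$ by combining the Gibbs property (which gives $\mu(C_{i_0\dots i_{n-1}}) \asymp e^{-nP(\F)}f_n(x)$, hence $-\frac1n\log\mu(C_{i_0\dots i_{n-1}}) \to P(\F) - \lim\frac1n\log f_n$ along $\mu$-typical points) with the Shannon--McMillan--Breiman theorem and the Kingman-type convergence already established in the proof of Theorem \ref{main1}. Then for uniqueness I would argue that any equilibrium measure $\nu'$ must be absolutely continuous with respect to $\mu$: one shows $\int \log\frac{d\mu}{d\nu'}\,d\nu' \geq 0$ with equality forcing $\nu' = \mu$, via a convexity/entropy argument identical to the one used to prove uniqueness of equilibrium states for functions in the Bowen class on BIP shifts (Sarig \cite{s3, s4}), with the Birkhoff sums $\sigma_n\phi$ replaced by $\log f_n$ and the almost-additive comparison used to control the resulting Radon--Nikodym cocycle.

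The hard part will be Step (2): establishing the existence and boundedness (away from $0$ and $\infty$) of the eigenfunction $h$ for $L_{f_1}$ when $\F$ is only almost-additive rather than additive, since the usual RPF machinery is built for the additive cocycle $\sigma_n\phi$ and here $f_n$ is not the $n$-fold product of $f_1$ — the discrepancy is bounded by $e^{(n-1)C}$, which is far too large to ignore in any cone estimate. The resolution is that the Gibbs bound, the invariance of the cone, and the contraction estimate should all be phrased directly in terms of the sequence $\{f_n\}$ and its Bowen constant $M$ (which is uniform in $n$), rather than in terms of $f_1$ alone; one works with the "sequential" Ruelle operators $L_{\F}^n$ introduced before Lemma \ref{bounds} and uses almost-additivity only to relate $L_{\F}^{n+m}$ to $L_{\F}^n\circ L_{\F}^m$ up to the fixed constant $e^{C}$, which is harmless. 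Making this substitution rigorous — showing that a single bounded eigenfunction can still be extracted despite $\{L_{\F}^n\}$ being only a "sub-multiplicative up to $e^C$" family rather than a genuine semigroup — is the technical crux, and it is exactly where the BIP hypothesis is used to obtain the uniform mixing bounds that drive the contraction.
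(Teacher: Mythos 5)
There is a genuine gap, and you have in fact pointed at it yourself without closing it. Your construction routes everything through the single transfer operator $L_{f_1}$: the conformal measure $\nu$ is obtained from weak-* limits of $L_{f_1}^{*n}\delta_x$, the eigenfunction $h$ from a cone-contraction argument for $L_{f_1}$, and mixing from a spectral gap of the normalised $L_{f_1}$. But conformality for $L_{f_1}$ only gives $\nu(C_{i_0\dots i_{n-1}})$ comparable to $\lambda^{-n}f_1(x)f_1(\sigma x)\cdots f_1(\sigma^{n-1}x)$, and almost-additivity relates this product to $f_n(x)$ only up to the factor $e^{\pm(n-1)C}$, which is exponentially large in $n$; the Bowen constant $M$ of $\F$ controls the oscillation of $f_n$ \emph{within} a cylinder, not the discrepancy between $f_n$ and the additive cocycle generated by $f_1$, so the claim that ``$L_{f_1}^{*n}$-conformality combined with the Bowen property of $\F$ yields $\nu(C_{i_0\dots i_{n-1}})\asymp \lambda^{-n}f_n(x)$ directly'' does not hold. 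Replacing $L_{f_1}^n$ by the sequential operators $L_{\F}^n$ does not repair this, because $\{L_{\F}^n\}$ is not the iterate of any fixed operator, so there is no eigenvalue equation $L_{f_1}h=\lambda h$ (or any substitute) to anchor the RPF machinery; you label making this rigorous ``the technical crux'', but that crux is precisely the theorem, and no argument is supplied. The same unproved spectral gap is then what your mixing claim rests on, so Gibbs existence, mixing, and (since it presupposes the Gibbs measure) uniqueness are all left hanging on the missing step.

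For contrast, the paper avoids any eigenfunction/eigenmeasure analysis. It exhausts $\Sigma$ by topologically mixing finite-state subshifts $Y_{l_n}$, invokes the compact almost-additive results of Barreira and Mummert to get a Gibbs equilibrium measure $\mu_{l_n}$ for $\F\vert_{Y_{l_n}}$ on each, and then uses the BIP property (a finite set $W$ of connecting words common to all $Y_{l_n}$) together with the Bowen constant and the hypothesis $\sum_{a\in S}\sup f_1\vert_{C_a}<\infty$ to make the Gibbs constants uniform in $l_n$ and to prove tightness of $\{\mu_{l_n}\}$; Prohorov's theorem then yields a weak limit $\mu$, which inherits the Gibbs bounds because $P(\F\vert_{Y_{l_n}})\to P(\F)$ by Proposition \ref{approx}. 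Ergodicity and mixing come from a direct cylinder-counting estimate of the form $\mu(C_{i_0\dots i_{n-1}}\cap\sigma^{-m}C_{j_0\dots j_{l-1}})\geq \tilde{C}_3\,\mu(C_{i_0\dots i_{n-1}})\mu(C_{j_0\dots j_{l-1}})$, and uniqueness of the equilibrium state (when $h(\mu)<\infty$) follows the Mauldin--Urba\'nski argument with $S_nf$ replaced by $\log f_n$. If you want to salvage your outline, the compact-approximation route (or Barreira's direct construction of $\nu_l$ from normalised sums of $\sup f_l$ over cylinders, which is what the paper adapts) is the standard way around the absence of a genuine Ruelle operator in the almost-additive setting.
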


\begin{proof}
The  proof is  inspired on \cite[Lemma 2.8]{mu} and \cite[Lemmas 1, 2 and Theorem 5]{b2}. Those results need to be modified and adapted to the almost-additive
setting and to the case of a non-compact phase space.  We identify a countable alphabet S with the set $\N$. 

Since $(\Sigma, \sigma)$ is topologically mixing and has the BIP property, there exist $k\in \N$ and a finite collection 
$W$ of admissible words of length $k$ such that for any $a, b\in S$, there exists $w\in W$ such that $awb$ is admissible (see \cite[p.1752]{s3} and \cite{mu}). Let $A$ be the transition matrix for $\Sigma$. 
By rearranging the set $\N$, there is an increasing sequence $\{l_{n}\}_{n=1}^{\infty}$ such that the matrix $A\vert_{ \{1, \dots, l_{n}\}\times \{1, \dots, l_{n}\}}$ 
is primitive. Let $Y_{l_n}$ be the topologically 
mixing finite state Markov shift with the transition matrix $A\vert_{ \{1, \dots, l_{n}\}\times \{1, \dots, l_{n}\}}$. Then there exists $p\in \N$ such that 
for all $n\geq p$, $Y_{l_n}$ contains all admissible words in  $W$.  We denote by $B_n(Y_l)$ the set of admissible words of length $n$ in $Y_l$.
Since $\F$ is almost-additive, there exists $C>0$ such that for each $n, m\in \N, x\in\Sigma$,
\begin{equation}\label{e1}
f_n(x) f_{m}(\sigma^n x) e^{-C} \leq f_{n+m}(x)\leq  f_n(x) f_{m}(\sigma^n x) e^{C}.
\end{equation}
\begin{claim}\label{claim}  
For $Y_{l_n} \subset \Sigma$, $n\geq p$, there is a unique equilibrium measure for $\F\vert_{Y_{l_n}}$ 
and it is Gibbs for $\F\vert_{Y_{l_n}}$. Moreover, the constant $C$ (see Definition \ref{def-gibbs}) can be chosen in such a way that $C$ is independent of $Y_{l_n}$.
\end{claim}
\begin{proof} [Proof of the Claim] Since $\F$ is a Bowen sequence, we assume that it satisfies equation (\ref{bowenbound}).  Clearly, $\F\vert_{Y_{l_n}}$ is 
an almost-additive Bowen sequence on $(Y_{l_n}, \sigma\vert_{Y_{l_n}})$. Therefore, the first part of the claim is 
immediate by \cite[Theorem 5]{b2} or  \cite[Theorem 6]{m}. Slightly modifying the proof 
of  \cite[Lemmas 1, 2 and Theorem 5]{b2}, we will obtain the second part of the claim. 
We only show the first step of the proof to see how we can get a uniform constant $C$. By the assumptions, any admissible word in $W$ is an admissible word in $Y_{l_n}$ for all  
$n\geq p$. Fix $Y_{l_n}$, $n\geq p$, and call it $Y$.  Define $\alpha_n^{Y}=\sum_{i_0\cdots i_{n-1}\in B_{n}(Y)}\sup\{f_n\vert _Y(y): y\in C_{i_0\dots i_{n-1}}\}$.
For $l \in \N$, let $\nu_{l}$ be the Borel probability measure on $Y$ defined by  
\begin{equation*}
\nu_{l}(C_{i_0\dots i_{l-1}})=\frac{\sup\{f_{l}\vert_{Y}(y):y\in C_{i_{0}\dots i_{l-1}}\}}{\alpha_{l}^{Y}}.
\end{equation*}
Let $n\in \N$ and $l\geq n+k$.  For any admissible words 
$i_{0}\cdots i_{n-1}$ and $j_0\dots j_{l-k-1}$ in $Y$, there exists $m_0\dots m_{k-1}\in W$ such that $i_0 \dots i_{n-1} m_0 \dots m_{k-1}j_{0}\dots j_{l-k-1}$ 
is an admissible word in $Y$. 
For $w\in W$, let $N_{w}=\sup\{f_k(z):z\in C_w\}$ and 
$\bar{N}=\min\{N_{w}: w\in W\}$. For any $y=(y_0, \dots, y_n,\dots)\in Y\subset \Sigma$ with $y_{0}\dots y_{k-1}=w\in W$, we have 
\begin{equation*}\label{k1}
\frac{N_{w}}{f_{k}\vert_{Y}(y)}\leq M. 
\end{equation*}
By (\ref{e1}) and (\ref{k1}), for each 
$y\in C_{i_0\dots i_{n-1} m_0\dots m_{k-1}j_{0}\dots j_{l-k-1}}$, we have
\begin{align*}
f_{l+n}\vert_Y(y) &\geq f_{n}\vert_Y(y)f_{k}\vert_Y(\sigma^{n}y)f_{l-k}\vert_Y(\sigma^{n+k}y)e^{-2C}\\
&\geq \frac{\bar{N}e^{-2C}}{M^3} \sup \{f_n\vert_Y(y):y\in C_{i_0 \dots i_{n-1}}\}\sup\{f_{l-k}\vert_Y(y): y\in C_{j_0 \dots j_{l-k-1}}\}.  
\end{align*}
For each fixed $i_0\dots i_{n-1}\in B_{n}(Y)$, we have 

\begin{align}\label{lowerbound}
&\sum_{t_{0}\dots t_{l-1}}\sup \{f_{n+l}\vert_Y(y): y\in C_{i_{0}\dots i_{n-1} t_{0}\dots t_{l-1}}\}\\
&\geq \sum_{j_{0}\dots j_{l-k-1}}\frac{\bar{N}e^{-2C}}{M^{3}}\sup\{f_{n}\vert_Y(y):y\in C_{i_{0}\dots i_{n-1}}\} \sup \{f_{l-k}\vert_Y(y):y\in C_{j_{0}\dots j_{l-k-1}} \}\\
&\geq \frac{\bar{N}e^{-2C}}{M^{3}} \sup\{f_n\vert_Y(y): y\in C_{i_{0}\dots i_{n-1}} \} \alpha_{l-k}^{Y}.
\end{align}
Since $\alpha_{l}^{Y}\leq e^{C}\alpha_{n}^{Y}\alpha_{l-n}^{Y}$, we obtain 
\begin{equation*}
\alpha_{n+l}^{Y} \geq \frac{\bar{N}e^{-3C}\alpha_n^{Y} \alpha_{l}^{Y}}{M^{3}\alpha_{k}^{Y}}.
\end{equation*}
We note that by (\ref{e1})
\begin{equation*}
\alpha_{k}^{Y}=\sum_{i_{0}\dots i_{k-1}} \sup\{f_k\vert_Y(z):z\in C_{i_{0}\dots i_{k-1}}\}\leq e^{(k-1)C}(\sum_{i\in \N}f_1\vert_{C_i})^{k}<\infty.
\end{equation*}
Therefore, there exists $C_1>0$ such that
\begin{equation}\label{p3}
\alpha_{n+l}^{Y}\geq C_{1}\alpha_n^{Y}\alpha_l^{Y},
\end{equation}
and clearly $C_1$ does not depend on $l_n$.
Now we observe that 
\begin{equation}\label{p}
P(\F\vert _{Y})=\lim_{n \rightarrow\infty}\frac{1}{n}\log \alpha_n^{Y}.
\end{equation}
To see this, we claim that there exists a constant $C_2>0$ such that for a symbol $a\in \N$,
\begin{equation}\label{p4}
C_2 \alpha_n^{Y} \leq \sum_{z\in C_{a}, \sigma^{n+2k+1}z=z}f_{n+2k+1}\vert_Y(z).
\end{equation}
Clearly,
\begin{equation*}
f_{n+2k+1}\vert_Y(z)\geq f_{k+1}\vert_Y(z)f_{n}\vert_Y(\sigma^{k+1}z)f_{k}\vert_Y(\sigma^{n+k+1}z)e^{-2C}.
\end{equation*}
Therefore, 
\begin{align*}
\sum_{z\in C_a, \sigma^{n+2k+1}z=z}f_{n+2k+1}\vert_Y(z) & \geq \frac{{\bar{N}}^2e^{-2C}}{M}\sup\{f_{1}\vert_Y(z):z\in 
C_{a}\} \sum_{z\in C_{z_0 \dots z_{n-1}}}f_n\vert_Y(z)\\
&\geq \frac{{\bar{N}}^2e^{-2C}}{M^2} \sup\{f_{1}\vert_Y(z):z\in 
C_{a}\} \alpha_n^{Y},
\end{align*}
where the summation in the right hand side of the first inequality is taken over a set containing a point $z$ from each cylinder set $C_{z_0 \dots z_{n-1}}$.
This proves (\ref{p4}). We also note that from the construction $C_2>0$ does not depend on $l_n$. 
By the definition of the pressure, we obtain  (\ref{p}). Since 
\begin{equation*}
\lim_{n\rightarrow\infty}\frac{1}{n}\log \alpha_{n}^{Y}=\inf_{n\in \N}\frac{1}{n}\log e^{C} \alpha_n^{Y}, 
\end{equation*}
we have $e^{C}\alpha_n^{Y}\geq e^{P(\F\vert_Y)n}$.
Similarly, by (\ref{p3}),  
\begin{equation*}
P(\F\vert _{Y})=\sup_{n\in \N}\frac{1}{n}\log C_{1}\alpha_n^{Y}.
\end{equation*}
Therefore, 
\begin{equation}\label{bpressure}
 C_1\alpha_n^{Y}\leq e^{P(\F\vert_Y)n}\leq e^{C}\alpha_n^{Y}.
\end{equation}
%By the definition of $\nu_{l}$ we have 
%\begin{equation*}
%\nu_{l}(C_{i_0\dots i_{n-1}})=\sum_{j_{0}\dots j_{l-1-n}\in B_{l-n}(Y)}\frac{\sup\{f_{l}\vert_{Y}(y):y\in C_{i_{0}\dots i_{n-1}j_{0}\dots j_{l-1-n}}\}}
%{\alpha_{l}^{Y}}.
%\end{equation*}
Using (\ref{lowerbound}), (\ref{p3}) and  (\ref{bpressure}),  similar arguments to those in \cite{b2} show 
that there exist constants $C_3, C_4>0$, both independent of $l_n$, such that for all $n\in \N$
\begin{equation}\label{gibbsp}
 C_3\leq \frac{\nu_{l}(C_{i_0 \dots i_{n-1}})}{e^{-nP(\Phi(\F\vert_{Y}))}f_n\vert_{Y}(y)}\leq C_4, i_0 \dots i_{n-1}\in B_{n}(Y).
\end{equation}
Now we take a convergent subsequence $\{\nu_{l_k}\}_{k=1}^{\infty}$ of  $\{\nu_{l}\}_{l=1}^{\infty}$ and let $\nu$ be the limit point. 
Then $\nu$ %is a Gibbs measure for $\F \vert _Y$ which 
satisfies equation (\ref{gibbsp}), (replacing $\nu_l$ by $\nu$). Modifying arguments in \cite{b2} by using the property of 
bounded variation and the BIP property as seen in the above arguments, we conclude 
that we can choose a constant $C$ (in Definition \ref{def-gibbs}) for the $\sigma_{Y}$-invariant ergodic Gibbs measure $\mu_Y$ for $F\vert_Y$ such that $C$ is independent of $l_n$.
\end{proof}
By the claim above, for each fixed $l_n$,$n\geq p$, $\F\vert_{Y_{l_n}}$ has a unique equilibrium state $\mu_{l_n}$ which is Gibbs, i.e., for each $q\in \N$, there exist $\tilde{C_1}, \tilde {C_2}>0$ such that
\begin{equation}\label{eachgibbs}
\tilde C_1 \leq \frac{\mu_{l_n}(C_{i_0\cdots i_{q-1}})}{e^{-qP(\F\vert_{Y_{l_n}})}f_{q}\vert_{Y_{l_n}}(y)} \leq \tilde{C_2},\text{ for } y\in C_{i_0\dots i_{q-1}}, i_{0}
\dots i_{q-1}\in B_q(Y_{l_n}).
\end{equation}
We first show that the sequence $\{\mu_{l_n}\}_{n=p}^{\infty}$ of $\sigma$-invariant Borel probability measures on $\Sigma$ is tight.  Let $\pi_{k}:\Sigma\rightarrow \N$ be the projection onto the $k$-th coordinate. 
Then for each $a\in \N$,
\begin{align*}
\mu_{l_n}(\pi^{-1}_k(a)) &= \sum_{i_0 \dots i_{k-2}a\in B_k(Y_{l_n})}\mu_{l_n}(C_{i_0\cdots i_{k-2}a}) \\
& \leq  \tilde{C_2}\sum_{i_0 \dots i_{k-2}a \in B_k(Y_{l_n}), y\in C_{i_0\dots i_{k-2}a}}e^{-kP(\F\vert_{Y_{l_n}})}f_{k}\vert_{Y_{l_n}}(y) (by  (\ref{eachgibbs})),
\end{align*}
where the summation is taken over a set containing a point $y$ from each cylinder set $C_{i_0\dots i_{k-2}a}$.
By (\ref{e1}),  
\begin{equation*}
f_{k}\vert_{Y_{l_n}}(y)\leq f_{k-1}\vert_{Y_{l_n}}(y)f_{1}\vert_{Y_{l_n}}\left(\sigma^{k-1}y\right)e^C\leq \left(\sup f_{1}\vert_{C_a}\right)f_{k-1}\vert_{Y_{l_n}}(y)e^C.
\end{equation*}
Thus
\begin{align*}
&\sum_{i_0\dots i_{k-2}a\in B_k(Y_{l_n}), y\in C_{i_0\dots i_{k-2}a}}f_{k}\vert_{Y_{l_n}}(y) \\
&\leq 
\left(\sup f_{1}\vert_{C_a} \right)e^C \sum_{i_0\dots i_{k-2}a\in B_k(Y_{l_n}), y\in C_{i_0\dots i_{k-2}a}} f_{k-1}\vert _{Y_{l_n}}(y)\\
& \leq \left(\sup f_{1}\vert_{C_a} \right)e^{C(k-1)}\sum_{i_0 \dots i_{k-2}a\in B_{k}(Y_{l_n})}\left(f_{1}\vert_{Y_{l_n}}(y)f_{1}\vert _{Y_{l_n}}(\sigma y)\dots 
f_1 \vert_{Y_{l_n}}(\sigma^{k-2}y) \right)\\
&\leq \left(\sup f_{1}\vert _{C_a}\right)e^{C(k-1)}\sum_{i_{0}\dots i_{k-2}a\in B_{k}(Y_{l_n})}\sup f_{1}\vert_{C_{i_0}}\dots \sup f_{1}\vert_{C_{i_{k-2}}}\\
& \leq \left(\sup f_{1}\vert _{C_a}\right)e^{C(k-1)}\left(\sum_{i\in \N}\sup f_{1}\vert_{C_i}\right)^{k-1}.
\end{align*}
Hence
\begin{eqnarray*}
\mu_{l_n}(\pi^{-1}_k(a))&\leq \tilde{C_2} e^{-kP(\F\vert _{Y_{l_n}})}\left(\sup f_1\vert_{C_a}\right)e^{C(k-1)}(\sum_{i\in \N}\sup f_{1}\vert_{C_i})^{k-1}.
\end{eqnarray*}
Now set  $N=P(\F\vert_{Y_{l_1}})$ if $P(\F)<0$, and $N=\min\{-P(\F), -\vert P(F\vert_{Y_{l_1}})\vert \}$ otherwise. Then we obtain
\begin{eqnarray*}
\mu_{l_n}(\pi_k^{-1}[a+1, \infty))\leq \tilde{C_2} e^{-kN+C(k-1)}\left(\sum_{i\in \N}\sup f_{1}\vert_{C_i}\right)^{k-1}
\sum_{i>a}\sup f_1\vert_{C_i}.
\end{eqnarray*}
Since  $\sum_{i\in \N}\sup f_{1}\vert_{C_i}<\infty$, for each given $\epsilon>0, k\in \N$, we can find $n_k \in \N$ 
with the property of
\begin{equation*}
\tilde{C_2} e^{-kN+C(k-1)}\left(\sum_{i\in \N}\sup f_{1}\vert_{C_i}\right)^{k-1}
\sum_{i>n_k}\sup f_1\vert_{C_i}\leq \frac{\epsilon}{2^k}.
\end{equation*}
Therefore, for any $l_n, k \in \N$, 
\begin{equation*}
\mu_{l_n}\left(\pi_k^{-1}[n_k+1, \infty) \right)\leq\frac{\epsilon}{2^k},
\end{equation*}
and so
\begin{equation*}
\mu_{l_n}\left(\Sigma\cap \prod_{k\geq 1}[1, n_k]\right)\geq 1-\sum _{k\geq 1}\mu_{l_n}\left(\pi_{k}^{-1}([n_k+1, \infty))\right)\geq 1-\epsilon.
\end{equation*}
Since $\Sigma \cap \prod_{k\geq 1}[1, n_k]$ is a compact subset of $\Sigma$, by Prohorov's theorem, the sequence $\{\mu_{l_n}\}_{n=p}^{\infty}$ is tight.
Therefore, there exists a convergent subsequence $\{\mu_{{l}_{n_k}}\}_{k=1}^{\infty}$ of $\{\mu_{l_n}\}_{n=p}^{\infty}$. We denote by $\mu$ a limit point of this subsequence. Since it is a limit point 
of a sequence of invariant measures on $\Sigma$, $\mu$ is also $\sigma$-invariant on $\Sigma$. By the property (\ref{eachgibbs}), letting $l_n\rightarrow \infty$, we obtain
for $q\in \N$ and each $ y\in C_{i_0\dots i_{q-1}}, i_0\dots i_{q-1}\in B_q(\Sigma)$,
\begin{equation}\label{g}
\tilde C_1 \leq \frac{\mu\left(C_{i_0\cdots i_{q-1}}\right)}{e^{-qP(\F)}f_{q}(y)} \leq \tilde{C_2}.
\end{equation}
Therefore, $\mu$ is a Gibbs measure for $\F$.

In order to show that $\mu$ is ergodic, we use similar arguments to those used to prove \cite[Lemma 2]{b2}.
%and we mention the parts we need to modify. Define 
%$\alpha_{n}^{\Sigma}$ in a similar manner as we defined $\alpha_{n}^{Y}$. Then we have $e^{C}\alpha_{n}^{\Sigma}\geq e^{P(\F)n}$ by the proof used in the proof of 
%the claim \ref{claim}. Using it with the property of the topologically mixing with the BIP property, we obtain the result.
Let $i_0\dots i_{n-1}$ and $j_{0}\dots j_{l-1}$ be fixed admissible words in $\Sigma$. Let $k$ and $\bar{N}$ be defined as in the proof of claim \ref{claim}. We also define 
$\alpha_{n}^{\Sigma}$ in a similar manner as we defined $\alpha_{n}^{Y}$. Then for $m-n\geq k$,
\begin{align*}
&\mu(C_{i_{0}\dots i_{n-1}}\cap f^{-m}(C_{j_{0}\dots j_{l-1}}))\\
&=\sum_{i_{0}\dots i_{n-1}k_{0}\dots k_{m-n-1}j_{0}\dots j_{l-1}\in B_{m+l}(\Sigma)}\mu(C_{i_{0}\dots i_{n-1}k_{0}\dots k_{m-n-1}j_{0}\dots j_{l-1}})\\
&\geq \tilde{C_1} e^{-(m+l)P(\F)} \sum_{y\in C_{i_0 \dots i_{n-1}k_{0}\dots k_{m-n-1}j_{0}\dots j_{l-1}}}f_{m}(y)f_{l}(\sigma^m{y})f_{m-n}(\sigma^{n}y)e^{-2C}\\
&\geq \frac{e^{-2C}\tilde{C_1}e^{-(m+l)P(\F)}}{M^2}\sup\{f_n(y):y\in C_{i_{0}\dots i_{n-1}}\}
\sup\{f_{l}(y):y\in C_{j_{0}\dots j_{l-1}}\}\\
&\sum_{y\in C_{i_{0}\dots i_{n-1}k_{0}\dots k_{m-n-1}j_{0}\dots j_{l-1}}}f_{m-n}(\sigma^ny),
\end{align*}
where in the second and third inequalities each summation is taken over a set containing a point $y$ from each cylinder set 
$C_{i_{0}\dots i_{n-1}k_{0}\dots k_{m-n-1}j_{0}\dots j_{l-1}}$ in $\Sigma$.
Then
\begin{align*}
&\sum_{y\in C_{i_0\dots i_{n-1}k_{0}\dots k_{m-n-1}j_{0}\dots j_{l-1}}}f_{m-n}(\sigma^ny)\\
&\geq e^{-2C}\sum _{y\in C_{i_0 \dots i_{n-1}k_{0}\dots k_{m-n-1}j_{0}\dots j_{l-1}}}f_{k}(\sigma^n y)f_{m-n-2k}(\sigma^{n+k}y)f_{k}(\sigma^{m-k}y)\\
&\geq \frac{e^{-2C}\bar{N}^2}{M^3}\alpha_{m-n-2k}^{\Sigma}.
\end{align*}
Note that we have $e^{C}\alpha_{n}^{\Sigma}\geq e^{P(\F)n}$ by the proof used in the proof of the claim \ref{claim}. Therefore, using  (\ref{g}), 
it is easy to see that there exits a constant $\tilde{C}_3>0$ such that 
\begin{equation*}
\mu(C_{i_{0}\dots i_{n-1}}\cap f^{-m}(C_{j_{0}\dots j_{l-1}}))\geq \tilde{C}_3 \mu(C_{i_{0}\dots i_{n-1}})\mu(C_{j_{0}\dots j_{l-1}}).
\end{equation*}
Therefore, $\mu$ ergodic and thus it is the unique Gibbs measure for $\F$. If $h(\mu)<\infty$, using the proof of \cite[Theorem 3.5]{mu}(replace $S_nf$ and $P(f)$ by $f_n$ and $P(\F)$ respectively), 
it is the unique equilibrium measure for $\F$.
The fact that the measure $\mu$ is mixing is fairly standard and follows as in the final part of the proof of  \cite[Theorem 5]{b2}. 
\end{proof}

\section{Example 1: The full-shift} \label{ex1}
Let us consider the full-shift on a countable alphabet, that is $(\Sigma_F, \sigma)$, where
\[ \Sigma_F:=\{ (x_i)_{i=0}^{\infty} : x_i \in \N \} . \]
The good combinatorial properties of this shift allow us to make some explicit computations. Indeed, let $\{\lambda_j\}_{j=1}^{\infty}$ be a sequence of real numbers such that 
$\lambda_j \in (0,1)$ and $\sum_{j=1}^{\infty} \lambda_j < \infty$. Let   $\{\log c_n\}_{n=1}^{\infty}$ be an almost-additive  sequence of real numbers, that is, there exists a constant $C>0$ such that  
\[ e^{-C}c_n c_m \leq  c_{n+m} \leq  e^Cc_n c_m.\]
For $n\in \N$, define $f_n:\Sigma_F\rightarrow \R$ by  
\begin{equation*}
 f_n(x)=  c_n \lambda_{i_0} \lambda_{i_1} \cdots \lambda_{i_{n-1}}, \text{ for } x \in C_{i_0 \dots i_{n-1}}.
\end{equation*}
Let $\F=\{ \log f_n \}_{n=1}^{\infty}$. Then $\F$ is an almost-additive Bowen sequence on $\Sigma_F$. By definition we have
\begin{align*}
P(\F)&=\lim_{n \to \infty} \frac{1}{n} \log \left(\sum_{i_0, i_1, \dots, i_{n-1} \in \N} c_n\lambda_{i_0} \lambda_{i_1} \cdots \lambda_{i_{n-1}}\right) \\
&=\lim_{n \to \infty} \frac{\log c_ n}{n}  + \log \left(\sum_{i=1}^{\infty} \lambda_i \right).
\end{align*}
Clearly, Proposition \ref{approx} and the variational principle (Theorem \ref{main1}) hold for $\F$. Since $\sum_{i\in \N}\sup f_1\vert_{C_{i}}=c_1\sum_{i\in \N}\lambda_i<\infty$, 
by Theorem \ref{main2}, there exist a Gibbs measure $\mu$ for $\F$ which is mixing. If $h(\mu)<\infty$, then it is the unique equilibrium measure for $\F$. 

Similarly, we obtain an explicit formula for the pressure function for $t\F=\{ t\log f_n \}_{n=1}^{\infty}$, with $t \in \R$, namely
\begin{eqnarray*}
P(t\F)=\lim_{n \to \infty} \frac{t\log c_n}{n}   +  \log \left (\sum_{i=1}^{\infty} {\lambda_i}^t\right).
\end{eqnarray*}
In particular, there exists $t'>0$ such that 
\begin{equation*}
P(t\F)=
\begin{cases}
\infty & \text{ if } t<t';\\
\text{finite,} & \text{ if } t >t'.
\end{cases}
\end{equation*}
Moreover, for $t>t'$ the pressure function $t \to P(t\F)$  is real analytic, convex and decreasing. 
%We stress that the arguments used to prove real analyticity of the 
%pressure function are, in general, perturbation arguments applied to  the transfer operator. Since in this setting the transfer operator does not preserve
%any natural Banach space, in general  it is hard to prove this type of result. In this particular case we can do so, because of the  combinatorial properties of the full-shift and since the functions $f_n$ are locally constant.

\section{Example 2: A factor map} \label{ex2}
Factor maps between subshifts on a finite alphabet have been studied using equilibrium measures for continuous functions, 
subadditive-sequences of continuous functions and almost-additive sequences of continuous functions (see \cite{SS, BP, Fe4,Y}). The present example exhibits one of the pathologies that one might encounter when trying to extend the factor map theory to the countable Markov shift setting. Indeed,  we construct an almost-additive sequence of continuous functions, $\F$,  on a countable  Markov shift by considering a one-block factor map between countable Markov shifts. These type of sequence of functions provide relevant information about the factor map in the case of finite state sub-shift of finite type. Nevertheless in our setting the pressure is infinite, $P(\F)=\infty$. 
Let $A$ be the matrix defined by 
$A=(a_{ij})_{\N_{0}\times \N_{0}}$ where $a_{i0}=a_{0j}=1$, for all $i,j\in \N_{0}$, and $a_{ij}=0$ otherwise. 
Let $B$ be the matrix defined by $B=(b_{ij})_{\N\times \N}$, $b_{i1}=b_{1j}=1$, for $i,j\in \N$, and $b_{ij}=0$ otherwise.
Let $X, Y$ be the topologically mixing countable Markov shifts with the BIP property 
determined by the transition matrix $A,B$ respectively. Let $\pi:X\rightarrow Y$ 
be a factor map defined by $\pi(k)={k}/{2}+1$ if $k$ is even, and $\pi(k)=(k-1)/{2}+1$ if $k$ is odd. 
   For an admissible word $y_1\dots y_n$ of length $n$ on Y, denote by 
$\vert \pi^{-1}[y_1\dots y_n]\vert$ the 
number of admissible words of length $n$ in $X$ that are mapped to $y_1\dots y_n$ by $\pi$.
Define $\phi_n:Y \rightarrow \R$ by $\phi_n(y)=\vert \pi^{-1}[y_1\dots y_n]\vert$ and let
$\F=\{-\log \phi_n\}_{n=1}^{\infty}$. Let $A_1$ be the transition matrix of the symbols of $\pi^{-1}\{1\}=\{0,1\}$ and $a$ be the largest eigenvalue of the matrix $A_1$. 
It is easy to see that 
in general a point $y$ in $Y$ is given by $y=1^{n_1}i_11^{n_2}i_2\dots$ or $y=i_11^{n_2}i_2\dots$, where $i_1, i_2, \dots\geq 2,  n_1, n_2, \dots \geq 1$ and that there exist $C_1, C_2>0$ such that for any $l\in \N$,
\begin{equation*} 
C_12^{l}a^{n_1+\cdots+n_l}\leq \vert \pi^{-1}[1^{n_1}i_11^{n_2}\dots i_{l-1}1^{n_l}i_{l}]\vert \leq C_22^{l}a^{n_1+\cdots+n_l}.
\end{equation*}
We can approximate the upper bouds and lower bounds of  $\vert \pi^{-1}[1^{n_1}i_11^{n_2}\dots i_{l-1}1^{n_l}]\vert$, $\vert \pi^{-1}[i_11^{n_2}\dots i_{l-1}1^{n_l}i_{l}]\vert$, and    
$\vert \pi^{-1}[i_11^{n_2}\dots i_{l-1}1^{n_l}]\vert$ similary. 
Therefore, $\F$ is an almost-additive Bowen sequence on $Y$ and Proposition \ref{approx} holds. However, since the entropy of $(Y,\sigma)$ is infinite and the functions $\frac{1}{n} \log \phi_n$ are uniformly bounded (below and above)  we have that $P(\F)=\infty$.

\section{Maximal Lyapunov exponents of product of matrices} \label{le}
Let $A, B$ be two square matrices of size $d \times d$. Let $U$ be the $d-$dimensional column vector having each coordinate equal to $1$. Consider the following norm
\begin{equation}
\Vert A \Vert := U^t A U.
\end{equation}
Let  $\{A_1, A_2, \dots \}$ be a countable collection of $d \times d$ matrices and  let $(\Sigma, \sigma)$ be a topologically mixing countable Markov shift. 
If $w=(i_0, i_1, \dots) \in \Sigma$,  define the sequence of functions by
\[\phi_n(w)= \Vert A_{i_{n-1}}  \cdots A_{i_1} A_{i_0}\Vert.\]
Since
\begin{equation*}
\Vert AB \Vert \leq \Vert A  \Vert \Vert B \Vert,
\end{equation*}
the sequence $\F= \{\log\phi_n \}_{n=1}^{\infty}$ is sub-additive on $\Sigma$. The study of this type of functions began with the work of Bellman \cite{be} and flourished with the seminal work  of Furstenberg and Kesten \cite{fk} who in 1960 considered the case of finitely many square matrices $\{A_1, A_2, \dots ,A_m\}$ and the full-shit on $m$-symbols. 
They proved that if $\mu \in\M$ is ergodic then $\mu$-almost everywhere the following equality holds:
\begin{equation*}
\lim_{n\to \infty} \frac{1}{n} \int \log \phi_n \ d\mu =  \lim_{n\to \infty} \frac{1}{n} \log \phi_n(w).
\end{equation*}
Kingman \cite{ki}, eight years later, proved his famous sub-additive ergodic theorem from which the above result follows.  The number
\[\lambda(w) := \lim_{n\to \infty} \frac{1}{n} \log \phi_n(w),\]
is called \emph{Maximal Lyapunov exponent of $w$}, whenever the limit exists.  It is a fundamental dynamical quantity whose study arises in a wide range of different 
context, e.g. Schr\"odinger operators \cite{aj}, smooth cocycles \cite{av}, Hausdorff dimension of measures \cite{Fe3}. Actually, its effective computation is also of interest \cite{po}.
Recently, in a series of papers Feng  \cite{Fe1, Fe2, Fe3}  studied dimension theory and thermodynamic formalism for maps $M :Ê\Sigma \to L(\R^d, \R^d)$, where $L(\R^d, \R^d)$
denotes the space of $d \times d$ matrices. The techniques developed in the previous section allow us to generalise some of the results obtained by Feng to this non-compact setting.

In this generality, the sequence $\{\log \phi_n\}_{n=1}^{\infty}$ is only sub-additive (not necessarily almost-additive) and the shift $(\Sigma, \sigma)$ need not to satisfy the BIP property. 
Under certain additional assumptions, we are able to prove the existence of Gibbs measures.

\begin{prop} \label{mat}
Let $(\Sigma, \sigma)$ be a countable Markov shift satisfying the BIP condition. Let  $\{A_1, A_2, \dots \}$ be a countable collection of $d \times d$ matrices 
having strictly positive entries. For $n\in \N$, define $\phi_n: \Sigma  \rightarrow \R$ by \[\phi_n(w)= \Vert A_{i_{n-1}}  \cdots A_{i_1} A_{i_0} \Vert\] for 
$w=(i_{0}, i_{1}, \dots)\in \Sigma$. 
If $\F=\{\log \phi_n\}_{n=1}^{\infty}$ is an almost-additive sequence on $\Sigma$ with  $\sum_{i=1}^{\infty} \Vert A_i \Vert < \infty$, 
then 
\begin{equation*}
P(\F)=\sup \left\{h(\mu)+\lim_{n\rightarrow \infty}\frac{1}{n}\int \log \phi_n \  d\mu:  \mu\in \M       
 \ and \lim_{n\rightarrow \infty}\frac{1}{n}\int \log \phi_n d\mu\neq -\infty \right\},
\end{equation*}
and there exists a Gibbs measure $\mu$ for $\F$ which is mixing.
\end{prop}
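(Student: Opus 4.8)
The plan is to verify that $\F$ satisfies all the standing hypotheses of Theorem \ref{main1} and Theorem \ref{main2}, and then invoke those two results directly. Since $\F$ is \emph{assumed} to be almost-additive (and $(\Sigma,\sigma)$ is topologically mixing by our standing convention), the only things left to check are that each $\phi_n$ is a strictly positive continuous function, that $\F$ is a Bowen sequence, and that $\sum_{a\in S}\sup\phi_1\vert_{C_a}<\infty$ (which in particular forces $\sup\phi_1<\infty$, the hypothesis of Theorem \ref{main1}).

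First I would observe that, since each $A_i$ has strictly positive entries, so does every finite product $A_{i_{n-1}}\cdots A_{i_0}$; as $U$ also has positive entries, $\phi_n(w)=U^{t}A_{i_{n-1}}\cdots A_{i_0}U>0$, so indeed $\phi_n:\Sigma\to\R^{+}$. The crucial remark is that $\phi_n(w)$ depends only on the coordinates $i_0,\dots,i_{n-1}$ of $w=(i_0,i_1,\dots)$, hence $\phi_n$ is constant on each length-$n$ cylinder $C_{i_0\cdots i_{n-1}}$. In particular $\phi_n$ is locally constant, so continuous, and the quantity $A_n$ of Definition \ref{bowen} equals $1$ for every $n\in\N$; thus $\F$ is a Bowen sequence with $M=1$.

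Next I would check the summability condition. On the cylinder $C_a$ the function $\phi_1$ is identically equal to $\Vert A_a\Vert$, so
\[
\sum_{a\in S}\sup\phi_1\vert_{C_a}=\sum_{i=1}^{\infty}\Vert A_i\Vert<\infty
\]
by hypothesis, whence also $\sup\phi_1=\sup_{i}\Vert A_i\Vert<\infty$. With all hypotheses in place, Theorem \ref{main1} yields the variational principle in the form stated (the interchange of the limit and the integral, and the clause $\lim_{n\to\infty}\frac{1}{n}\int\log\phi_n\,d\mu\neq-\infty$, being part of that theorem), and Theorem \ref{main2}, applicable because $\F$ is an almost-additive Bowen sequence on a topologically mixing countable Markov shift with the BIP property, produces a Gibbs measure $\mu$ for $\F$ which is mixing (and, if $h(\mu)<\infty$, the unique equilibrium measure, though this is not part of the statement).

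I do not expect a real obstacle here: the content of the proposition is precisely that the general machinery of Sections \ref{sec:var} and \ref{sg} applies to this class of examples essentially verbatim. The one point worth stressing --- and the reason almost-additivity enters as a hypothesis rather than a conclusion --- is that for products of matrices with positive entries the lower bound $\Vert AB\Vert\ge e^{-C}\Vert A\Vert\,\Vert B\Vert$ is not automatic; it is an extra requirement on the collection $\{A_i\}$ (valid, for instance, under a uniform bound on the ratios of the entries). Once it is granted, the Bowen property comes for free because each $\phi_n$ is locally constant, and everything else is routine bookkeeping.
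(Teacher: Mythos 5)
Your proposal is correct and follows essentially the same route as the paper: observe that each $\phi_n$ is locally constant on length-$n$ cylinders (so the Bowen condition holds), note that $\sum_{a\in S}\sup\phi_1\vert_{C_a}=\sum_{i=1}^{\infty}\Vert A_i\Vert<\infty$, and then invoke Theorems \ref{main1} and \ref{main2} under the BIP assumption. Your additional remarks (positivity of $\phi_n$, $M=1$, $\sup\phi_1<\infty$) only make explicit what the paper leaves implicit.
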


\begin{proof}
Note that the continuous functions $\phi_n$ are locally constant over cylinders of length $n$. Therefore, the Bowen condition is satisfied.  Moreover,
\begin{equation*}
\sum_{a\in S}\sup {\phi_1}\vert _{C_a}=\sum_{i=1}^{\infty} \Vert A_i \Vert < \infty.
\end{equation*}
Since the system satisfies the BIP condition, the result follows from Theorem \ref{main2}. 
\end{proof}

\begin{rem}
If the measure $\mu$ in Proposition \ref{mat} is such that $h(\mu) < \infty$, then $\mu$ is the unique equilibrium measure for $\F$. 
\end{rem}

The assumption $\sum_{i=1}^{\infty} \Vert A_i \Vert < \infty$  implies that
$\lim_{n \to \infty}  \Vert A_n \Vert =0.$ Therefore, almost-additivity cannot be obtained (in general)  as in \cite[Lemma 2.1]{Fe1}, even if all the entries are positive. Nevertheless, the same  proof  obtained by Feng in \cite[Lemma 2.1]{Fe1} gives us
\begin{lema}
Let  $\{A_1, A_2, \dots \}$ be a countable collection of $d \times d$ matrices having strictly positive entries. 
Suppose there exists a constant $C>0$ with the property that for every $k \in\N$ the following holds:
\begin{equation*}
\frac{\min_{i,j} (A_k)_{i,j}}{\max_{i,j} (A_k)_{i,j}} \geq dC.
\end{equation*} 
Then the sequence $\F=\{\log \phi_n\}_{n=1}^{\infty}$ is almost-additive on $\Sigma$, where $\phi_n: \Sigma\rightarrow \R$ is defined as in Proposition \ref{mat}  . 
\end{lema}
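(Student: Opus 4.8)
The plan is to verify Definition \ref{aaa} directly for $\phi_n$, splitting the two inequalities \eqref{A1} and \eqref{A2} and doing essentially the computation of Feng's \cite[Lemma 2.1]{Fe1}. Fix $w=(i_0,i_1,\dots)\in\Sigma$ and $n,m\in\N$, and write $P=A_{i_{n-1}}\cdots A_{i_0}$ and $Q=A_{i_{n+m-1}}\cdots A_{i_n}$, so that $\phi_n(w)=\Vert P\Vert$, $\phi_m(\sigma^n w)=\Vert Q\Vert$ and $\phi_{n+m}(w)=\Vert QP\Vert$. For the norm $\Vert M\Vert=U^tMU$, which equals the sum of all entries of $M$ when $M$ has nonnegative entries, the upper bound is free: $\Vert QP\Vert=\sum_{i,j}\sum_k Q_{ik}P_{kj}\leq\big(\sum_{i,k}Q_{ik}\big)\big(\sum_{l,j}P_{lj}\big)=\Vert Q\Vert\,\Vert P\Vert$, which gives \eqref{A2} with constant $0$. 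So only the lower bound \eqref{A1} requires real work, and the point of the hypothesis is to control, uniformly, how spread out the entries of an arbitrary finite product of the $A_k$'s can be.

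The key step I would isolate is the following claim: if $B=A_{j_r}\cdots A_{j_1}$ is any product of length $r\geq 1$ of matrices from the collection, then $\min_{a,b}B_{ab}\geq (dC)^2\max_{a,b}B_{ab}$. For $r=1$ this is the hypothesis. For $r\geq 2$ I would peel off the two outer factors separately. Writing $B=S\,A_{j_1}$ with $S=A_{j_r}\cdots A_{j_2}$, for a fixed row $k$ one has $B_{kj}=\sum_l S_{kl}(A_{j_1})_{lj}$, hence $B_{kj}$ and $B_{kj'}$ both lie between $\big(\min_{a,b}(A_{j_1})_{ab}\big)\sum_l S_{kl}$ and $\big(\max_{a,b}(A_{j_1})_{ab}\big)\sum_l S_{kl}$; so the ratio of two entries of $B$ in the same row is at least $dC$. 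Symmetrically, writing $B=A_{j_r}\,T$ with $T=A_{j_{r-1}}\cdots A_{j_1}$ shows the ratio of two entries of $B$ in the same column is at least $dC$. Composing a same-row with a same-column comparison gives $\min_{a,b}B_{ab}/\max_{a,b}B_{ab}\geq (dC)^2$ for an arbitrary pair of entries. Note that nothing here depends on $r$ or on the alphabet being finite: the constant $dC$ is uniform in $k$, which is exactly why Feng's proof carries over verbatim to the countable setting.

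With $\gamma:=(dC)^2$ as a uniform lower bound for the entry-ratios of $P$, the lower bound follows from an entry-by-entry estimate. Since every entry of $P$ satisfies $P_{kj}\geq\min_{a,b}P_{ab}\geq\gamma\max_{a,b}P_{ab}\geq\tfrac{\gamma}{d}\sum_l P_{lj}$, we get for every $i,j$
\[
(QP)_{ij}=\sum_k Q_{ik}P_{kj}\;\geq\;\frac{\gamma}{d}\Big(\sum_k Q_{ik}\Big)\Big(\sum_l P_{lj}\Big),
\]
and summing over $i$ and $j$ yields $\Vert QP\Vert\geq\tfrac{\gamma}{d}\Vert Q\Vert\,\Vert P\Vert=dC^2\,\phi_n(w)\,\phi_m(\sigma^n w)$. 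Combined with submultiplicativity this establishes \eqref{A1} and \eqref{A2} with almost-additivity constant $C'=\max\{0,\log(1/(dC^2))\}$ (one checks $dC\leq 1$ and hence $dC^2\leq 1$, so in fact $C'=\log(1/(dC^2))$), uniformly in $w$, $n$, $m$; continuity of each $\phi_n$ is clear since it is locally constant on cylinders of length $n$. The only genuinely delicate point is the two-sided peeling argument controlling the entry ratios of a product of arbitrary length; once that uniform bound is in place the rest is routine manipulation of sums of nonnegative numbers.
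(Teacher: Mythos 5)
Your argument is correct, but it does not follow the route the paper has in mind: the paper gives no proof at all and simply asserts that Feng's proof of \cite[Lemma 2.1]{Fe1} carries over, and that proof splits $\phi_{n+m}(w)=\Vert QP\Vert$ by peeling off a \emph{single} matrix at the junction — writing, say, $Q=Q'A_{i_n}$, one bounds $\Vert Q'A_{i_n}P\Vert \geq \bigl(\min_{a,b}(A_{i_n})_{a,b}\bigr)\Vert Q'\Vert\,\Vert P\Vert$ and $\Vert Q'A_{i_n}\Vert \leq d\bigl(\max_{a,b}(A_{i_n})_{a,b}\bigr)\Vert Q'\Vert$, so that the hypothesis $\min/\max\geq dC$ for that one matrix gives $\Vert QP\Vert\geq C\Vert Q\Vert\,\Vert P\Vert$ directly, with no information needed about entry ratios of long products. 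You instead prove a stronger intermediate statement — the uniform bound $\min_{a,b}B_{ab}\geq (dC)^2\max_{a,b}B_{ab}$ for \emph{every} finite product $B$, obtained by your row/column peeling of the two outer factors — and then run the entrywise estimate with the whole block $P$ playing the role of the regular matrix, arriving at the (slightly worse, but irrelevant) constant $dC^2$ in place of $C$. Both arguments are elementary positive-matrix estimates and both hinge on exactly the point you flag: the ratio hypothesis is uniform in $k$, which is what lets the finite-alphabet argument survive the countable alphabet, where $\sum_i\Vert A_i\Vert<\infty$ forces $\Vert A_i\Vert\to 0$. What your route buys is a reusable regularity property of all the products $A_{i_{n-1}}\cdots A_{i_0}$ (of independent interest, e.g.\ it immediately gives the Bowen condition for $\F$ with a concrete constant); what Feng's route buys is brevity, since only one factor ever needs to be examined. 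Your verification of the upper bound \eqref{A2} via submultiplicativity of $\Vert\cdot\Vert=U^t(\cdot)U$ and your check that $dC^2\leq 1$ are both fine.
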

The above condition is related to the cone condition studied by Barreira and Gelfert in Ê\cite{bg}. Alternative conditions ensuring almost-additivity of $\F$ have been obtained by Feng \cite[Proposition 2.8]{Fe3} and, in a slightly different setting,  by Falconer and Sloan \cite[Corollary 2.3]{fs}.

\begin{rem}
Under the assumptions of Proposition \ref{mat},  there exists a positive number $t' >0$ such that the pressure function $t \to P(t \F)$ has the following form:
\begin{equation*}
P(t\F)=
\begin{cases}
\infty & \text{ if } t<t';\\
\text{finite, convex and decreasing} & \text{ if } t >t'.
\end{cases}
\end{equation*}
\end{rem}

\section{A Bowen formula} \label{bow}
In this section, we apply the results obtained in order to prove a formula that relates the pressure with the Hausdorff dimension of a geometric construction. This formula generalises previous results by Barreira \cite{b1} to the countable setting.

Let us consider the following geometric construction in the interval. For every $n \in \N$, let $\Delta_n \subset [0,1]$ be a closed interval of length $r_n$. 
Assume that the intervals do not overlap, that is, if $m\neq n$ then $\Delta_m \cap \Delta_n = \emptyset.$ For each  $k \in \N$,  choose again a family of non-overlapping closed 
intervals $\{\Delta_{kn}\}_{n \in \N}$ with $\Delta_{kn}  \subset \Delta_k$. Denote the length of $\Delta_{kn}$ by $r_{kn}$. 
Iterating this procedure, for each interval $\Delta_{i_0 \dots i_{n-1}}$, we obtain a countable family  of non-overlapping closed intervals $\{\Delta_{i_0 \dots i_{n-1} m}\}_{m \in \N}$ with $\Delta_{i_0 \dots i_{n-1} m}  \subset \Delta_{i_0 \dots i_{n-1}}$. Denote by $r_{i_0 \dots i_{n-1} m}$ the length of $\{\Delta_{i_0 \dots i_{n-1} m}\}$. We define the limit set
\begin{equation*}
\K= \bigcap_{n=1}^{\infty} \bigcup_{(i_0 \cdots i_{n-1})}  \Delta_{i_0 \dots i_{n-1}}.
\end{equation*}
This geometric construction can be coded by a full-shift on a countable alphabet $(\Sigma_F, \sigma)$. That is, there exits a homeomorphism 
$\psi :\Sigma_F \to \K$.

Denote by $\phi_n:\Sigma \to \R$ the function defined by
$\phi_n(x)= \log r_{i_0 \dots i_{n-1}}$ if $x \in C_{i_0 \dots i_{n-1}}$ and by $\F=\{\phi_n\}_{n=1}^{\infty}$.

\begin{teo}
Let $\K$ be a geometric construction as above. Assume that there exists $C>0$ such that for every $n,m \in \N$ 
\[  r_{i_0 \dots i_{n-1}} r_{i_{n} \cdots i_{m-1}}e^{-C} \leq r_{i_0 \cdots i_{n+m-1}} \leq r_{i_0 \dots i_{n-1}} r_{i_{n} \cdots i_{m-1}}e^C. \]
Then
\[\dim_H(\K)= \inf \{ t \in \R: P(t \F) \leq 0\} .\]
\end{teo}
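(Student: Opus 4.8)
The plan is to prove the two inequalities $\dim_H(\K) \le s^*$ and $\dim_H(\K) \ge s^*$ separately, where $s^* := \inf\{t \in \R : P(t\F) \le 0\}$. Throughout, I will use that the geometric construction is coded by $\psi : \Sigma_F \to \K$ and that, because of the almost-multiplicativity hypothesis on the lengths $r_{i_0\dots i_{n-1}}$, the sequence $\F = \{\phi_n\}_{n=1}^\infty$ with $\phi_n(x) = \log r_{i_0\dots i_{n-1}}$ on $C_{i_0\dots i_{n-1}}$ is an almost-additive sequence of (locally constant, hence continuous) functions on $\Sigma_F$; note that $\F$ is automatically a Bowen sequence since $\phi_n$ is constant on $n$-cylinders, so $A_n = 1$ for all $n$. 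Hence all results of Sections \ref{sec:def} and \ref{sec:var} apply, and in particular $P(t\F)$ is well-defined and, by the remark in Example \ref{ex1}-type reasoning, the map $t \mapsto P(t\F)$ is non-increasing and convex where finite, so $s^*$ is a genuine threshold: $P(t\F) > 0$ for $t < s^*$ and $P(t\F) < 0$ for $t > s^*$ (with care at finiteness).

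For the upper bound $\dim_H(\K) \le s^*$, I would fix $t > s^*$, so $P(t\F) < 0$. By the definition of the partition function on the full shift,
\begin{equation*}
\sum_{(i_0\dots i_{n-1})} r_{i_0\dots i_{n-1}}^{\,t} \asymp Z_n(t\F, a) \cdot (\text{correction from the marker } a),
\end{equation*}
and more directly, since on the full shift every word $i_0\dots i_{n-1}$ is admissible, one checks that $\frac{1}{n}\log \sum_{(i_0\dots i_{n-1})} r_{i_0\dots i_{n-1}}^t \to P(t\F)$ (using almost-additivity to compare the sum over all words with the sum over periodic points in a fixed cylinder, exactly as in Lemma \ref{1} and Proposition \ref{indp}). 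Since $P(t\F) < 0$, for all large $n$ we get $\sum_{(i_0\dots i_{n-1})} r_{i_0\dots i_{n-1}}^{\,t} \le e^{n P(t\F)/2} \to 0$. The collection $\{\Delta_{i_0\dots i_{n-1}}\}$ is a cover of $\K$ by intervals of diameter $r_{i_0\dots i_{n-1}}$ with mesh tending to $0$ (this requires knowing the diameters go to $0$ uniformly, which follows from the nesting and the length bounds, or is part of the standing assumptions of such constructions), so $\mathcal{H}^t(\K) \le \liminf_n \sum_{(i_0\dots i_{n-1})} r_{i_0\dots i_{n-1}}^{\,t} = 0$. Hence $\dim_H(\K) \le t$ for every $t > s^*$, giving $\dim_H(\K) \le s^*$.

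For the lower bound $\dim_H(\K) \ge s^*$, I would fix $t < s^*$, so $P(t\F) > 0$, and produce a probability measure on $\K$ with the right scaling. Here I invoke Theorem \ref{main2}: since this is the full shift (which has the BIP property) and, after possibly restricting attention to the regime where $\sum_{i} \sup \phi_1^+|_{C_i}$-type summability holds — or more robustly, by approximating via topologically mixing finite sub-shifts $Y \subset \Sigma_F$ using the Corollary to Proposition \ref{approx} — there is, for each such $Y$, an equilibrium/Gibbs measure $\mu_Y$ for the normalized sequence $(t\F - P(t\F|_Y))$ on $Y$ satisfying $\mu_Y(C_{i_0\dots i_{n-1}}) \asymp e^{-nP(t\F|_Y)} r_{i_0\dots i_{n-1}}^{\,t}$ uniformly. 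Pushing $\mu_Y$ forward by $\psi$ to a measure $\nu_Y$ on $\K$ and applying the mass distribution principle: for a ball $B(x,\rho)$, one covers it by boundedly many cylinder images $\Delta_{i_0\dots i_{n-1}}$ with $r_{i_0\dots i_{n-1}} \approx \rho$ (this comparison of balls with construction intervals on the line is where one uses that we are in $[0,1]$ and the intervals are non-overlapping — this is the classical bounded-overlap argument of Barreira \cite{b1}), so $\nu_Y(B(x,\rho)) \lesssim e^{-nP(t\F|_Y)} \rho^{\,t} \le \rho^{\,t}$ once $P(t\F|_Y) \ge 0$; choosing $Y$ large enough that $P(t\F|_Y) > 0$ (possible by Proposition \ref{approx} since $P(t\F) > 0$) gives $\nu_Y(B(x,\rho)) \le c\,\rho^{\,t}$, whence $\dim_H(\K) \ge \dim_H(\psi(Y)) \ge t$. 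Letting $t \uparrow s^*$ finishes the argument.

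The main obstacle I expect is not the thermodynamic input — that is handed to us by Sections \ref{sec:def}--\ref{sg} — but the geometric comparison step in the lower bound: controlling how many construction intervals $\Delta_{i_0\dots i_{n-1}}$ of comparable length can meet a given ball $B(x,\rho)$. On the real line with non-overlapping intervals this is classical and bounded, but it genuinely uses the hypothesis that the $\Delta$'s are intervals in $[0,1]$ (the analogue fails badly for general Moran-type constructions without a separation condition), and one must also ensure the diameters $r_{i_0\dots i_{n-1}}$ shrink to $0$ with some uniformity so that, given $\rho$, an appropriate "stopping time" $n = n(x,\rho)$ exists with $r_{i_0\dots i_{n-1}} \approx \rho$. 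A secondary technical point is justifying $\frac{1}{n}\log\sum_{(i_0\dots i_{n-1})} r_{i_0\dots i_{n-1}}^t = P(t\F) + o(1)$ directly in terms of sums over all finite words rather than over periodic points, but this is exactly the content of the estimates already proved in Lemma \ref{1}, Lemma \ref{upper} and Proposition \ref{indp}, specialized to the full shift where admissibility is automatic.
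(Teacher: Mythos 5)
Your proposal is correct in substance and shares the paper's skeleton: your upper bound is exactly the paper's natural-cover argument (fix $t>s^*$, note $\sum_{(i_0\dots i_{n-1})} r_{i_0\dots i_{n-1}}^{\,t}\to 0$ exponentially, conclude $\mathcal{H}^t(\K)=0$; the uniform smallness of the mesh that you worry about is in fact automatic here, since each single term $r_{i_0\dots i_{n-1}}^{\,t}$ is bounded by the whole sum, hence by $e^{nL}$), and your lower bound, like the paper's, reduces to compact subsystems via Proposition \ref{approx}. The difference is in how the compact case is handled: the paper restricts to the finite-alphabet subsystems $K_n$, checks the ratio bound $r_{i_0\dots i_{n}}/r_{i_0\dots i_{n-1}}\geq R_n e^{-C}=\delta_n$ there, and then quotes Barreira's Bowen-type formula for compact almost-additive geometric constructions (\cite{b1}, see also \cite[p.35]{b4}) to get $\dim_H K_n=t_{K_n}$ with $P(t_{K_n}\F\vert_{K_n})=0$, whereas you re-derive this compact-case input directly, by taking the Gibbs measure for $t\F\vert_Y$ on a finite subshift (Barreira's existence theorem, i.e.\ the same ingredient used in the Claim inside the proof of Theorem \ref{main2}) and running the mass-distribution principle with a stopping-time and bounded-overlap argument. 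That route is legitimate and somewhat more self-contained, and you correctly identify where the geometry enters: the restriction to a finite sub-alphabet is precisely what makes it work, since on $Y$ the ratios are bounded below by $\delta=\min_i r_i\, e^{-C}>0$, so stopping cylinders at scale $\rho$ have lengths in $(\delta\rho,\rho]$ and, being non-overlapping intervals of the line, only boundedly many meet a ball of radius $\rho$; on the full countable construction this fails, which is also why neither you nor the paper can apply Theorem \ref{main2} directly (the summability $\sum_i r_i^{\,t}<\infty$ need not hold), and why both arguments must pass through Proposition \ref{approx}. The only further point to make explicit in your write-up is that distinct stopping cylinders meeting $B(x,\rho)$ may carry different stopping times $n$, which is harmless exactly because you chose $Y$ with $P(t\F\vert_Y)> 0$, so the factor $e^{-nP(t\F\vert_Y)}$ is at most $1$. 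In short: same two-sided strategy, with the compact-case Bowen formula re-proved by hand in your version and cited in the paper's.
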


\begin{proof}
Let us start with the lower bound. We consider a subset of $K_n \subset\K$ defined by the projection of $\psi$ restricted 
to the compact sub-shift $\Sigma_n \subset \Sigma$, where $\Sigma_n$ is the full-shift on $\{1, 2, \dots , n\}$. Denote by
\[R_n:= \min \{r_1, r_2, \dots, r_n  \}.\]
By the almost-additivity assumption on the radii we obtain that if $i_j \in \{1, 2, \dots , n\}$
then
\begin{equation*}
\frac{r_{i_0 i_2 \dots i_{n}}}{r_{i_0 i_2 \dots i_{n-1}}} \geq R_n e^{-C} := \delta_n
\end{equation*}
Then, a result  proved by Barreira in \cite{b1} (see also \cite[p.35]{b4}) implies that
\[\dim_{H} K_n = t_{K_n},\]
where $t_{K_n} \in \R$ is the unique root of the equation $P(t \F|_{K_n})=0$. By the approximation property of the pressure (see Theorem \ref{approx}), we obtain that
\[ \lim_{n \to \infty} \dim_H K_{n} = \inf \{  t \in \R: P(t \F) \leq 0 \}.\]
Since $K_n \subset \K$, this proves the lower bound.\\

In order to prove the upper bound, we make use of the natural cover. Let  $s \in \R$ be such that $\inf \{ t \in \R: P(t \F) \leq 0\} < s$. Since the pressure of $s \F$ is negative  there exists $L <0$ such that 
\begin{equation*}
\lim_{n \to \infty} \frac{1}{n} \log \sum_{(i_0 \dots i_{n-1}) \in \N^n} (r_{i_0 \dots i_{n-1}})^s = P(s \F) < L < 0.
\end{equation*}
Hence, for sufficiently large values of $n \in \N$ we have that
\begin{equation*}
  \sum_{(i_0 \dots i_{n-1}) \in \N^n} (r_{i_0 \dots i_{n-1}})^s < e^{nL}.
  \end{equation*} 
Since $L<0$ we have that $\lim_{n \to \infty}  e^{nL}=0.$ Note that for each $n \in \N$ the family $\{ \Delta_{i_0 \dots i_{n-1}} : i_0\dots i_{n-1} \in \N^n \}$ is a cover of $\K$. Moreover, as $n$ tends to infinity  the diameters of the sets $\Delta_{i_0 \dots i_{n-1}}$ converge to zero.
This implies that  the $s-$Hausdorff measure of $\K$ is zero. Therefore, we obtain the desired upper bound.
\end{proof}

It should be pointed out that, even in the additive case, it can happen that the equation
$P(t \K)=0$ does not have a root (see the work of Mauldin and Urba\'nski \cite{mu1} and that of Iommi \cite{io} for explicit examples). 

\begin{rem}
If the equation $P(t \F)=0$ has a root and the Gibbs measure corresponding to
$(\dim_H \K) \F$ is an equilibrium measure, then we obtain an almost-additive version of Ledrappier-Young formula (see \cite{ly}) . Indeed, 
\begin{equation*}
P((\dim_H \K) \F)= h(\mu) +\dim_H \K \left( \lim_{n \to \infty} \frac{1}{n} \int \phi_n   \ d \mu \right) =0.  \end{equation*}
Therefore,
\begin{equation*}
\dim_H \K = -\frac{h(\mu)}{ \lim_{n \to \infty} \frac{1}{n} \int \phi_n  \ d \mu}.
\end{equation*}
\end{rem}

\end{document}